%% LyX 2.3.4.2 created this file.  For more info, see http://www.lyx.org/.
%% Do not edit unless you really know what you are doing.
\documentclass[a4paper,oneside,english]{amsart}
\usepackage[T1]{fontenc}
\usepackage[latin9]{inputenc}
\usepackage{babel}
\usepackage{amstext}
\usepackage{amsthm}
\usepackage{xcolor}
\usepackage{amssymb}
\usepackage{setspace}
\usepackage[hidelinks,colorlinks=true,linkcolor=blue,citecolor=blue]{hyperref}

\doublespacing

\makeatletter

%%%%%%%%%%%%%%%%%%%%%%%%%%%%%% LyX specific LaTeX commands.
\pdfpageheight\paperheight
\pdfpagewidth\paperwidth

%%%%%%%%%%%%%%%%%%%%%%%%%%%%%% Textclass specific LaTeX commands.
\numberwithin{equation}{section}
\numberwithin{figure}{section}
\theoremstyle{plain}
\newtheorem{thm}{\protect\theoremname}
\theoremstyle{remark}
\newtheorem{rem}[thm]{\protect\remarkname}
\theoremstyle{plain}
\newtheorem{cor}[thm]{\protect\corollaryname}
\theoremstyle{plain}
\newtheorem*{thm*}{\protect\theoremname}

\def\G{{\mathbb{G}}}
\def\R{{\mathbb{R}}}
\def\d{{w(x)}}
\def\dd{{\text{dist}(x,\partial \mathbb{G}^{+})}}
\def\dx{{\text{dist}(\cdot,\partial \mathbb{G}^{+})}}

\makeatother

\providecommand{\corollaryname}{Corollary}
\providecommand{\remarkname}{Remark}
\providecommand{\theoremname}{Theorem}

\begin{document}
\title[logarithmic Hardy and Hardy-Poincar\'e inequalities]{Geometric logarithmic Hardy and  Hardy-Poincar\'e inequalities on  stratified groups}

 \author[M. Chatzakou]{Marianna Chatzakou}
\address{
	Marianna Chatzakou:
	\endgraf
	Department of Mathematics: Analysis, Logic and Discrete Mathematics
	\endgraf
	Ghent University, Krijgslaan 281, Building S8, B 9000 Ghent
	\endgraf
	Belgium
	\endgraf
	{\it E-mail address} {\rm marianna.chatzakou@ugent.be}}

\begin{abstract}
We develop a unified strategy to obtain the geometric logarithmic Hardy inequality on any open set $M\subset \G$ of a stratified group $\G$, provided the validity of the Hardy inequality in this setting, where the so-called ``weight'' is regarded to be any measurable non-negative function $w$ on $M$. Provided the legitimacy of the latter for some $M,w$, we also show an inequality that is an extension of the ``generalised Poincar\'e inequality'' introduced by Beckner with the addition of the weight $w$, and this is referred to as the ``geometric Hardy-Poincar\'e inequality''. The aforesaid inequalities become explicit in the case where $M=\G^{+}$, the half-space of $\G$, when $w(\cdot)=\dx$, and in the case where $M=\G$, when $w$ is the ``horizontal norm'' on the first stratum of $\G$. For the second case, the semi-Gaussian analogue of the derived inequalities is proved, when the Gaussian measure is regarded with respect to the first stratum of $\G$. Applying our results to the case where $\G=\R^n$ (abelian case) we  generalise the classical probabilistic Poincar\'e inequality  by adding weights.
\end{abstract}

\keywords{stratified groups; logarithmic geometric-Hardy inequality, Hardy-Poincar\'e inequality; generalised Poincar\'e inequality; Poincar\'e ineuqality with weights}

\maketitle

\section{Introduction}
\subsection{Hardy and logarithmic Hardy inequalities}
In the Euclidean setting, the classical Hardy inequality asserts that for any $p>1$, and for $u \in C_{0}^{\infty}(\Omega \setminus \{0\})$, where $\Omega \subset \R^N$ is a domain containing the origin, we have:
\begin{equation}\label{in.1}
    \int_{\R^N}|\nabla u(x)|^p\,dx \geq \left| \frac{N-p}{p}\right|^p \int_{\R^N}\frac{|u(x)|^p}{|x|^p}\,dx\,,
\end{equation}
where $|x|$ stands for the Euclidean norm of $x \in \R^N$, and the constant $\left|\frac{N-p}{p}\right|^p$ is optimal; see for example \cite{DH98} and \cite{HPL52}.

On the other hand, the so-called ``geometric Hardy inequality'' for $\Omega \subset \R^N$ being an open convex domain with smooth  boundary (possibly unbounded) reads as follows:
\begin{equation}\label{in.2}
\int_{\Omega}|\nabla u(x)|^p\,dx \geq \left(\frac{p-1}{p}\right)^p\int_{\Omega}\frac{|u(x)|^p}{\text{dist}(x,\partial \Omega)^p}dx\,,\quad p>1\,,
\end{equation}
for $u \in C_{0}^{\infty}(\Omega)$, where $\text{dist}(x,\partial \Omega)$ stands for the distance from $x \in \Omega$ to the boundary $\partial\Omega$, and the constant $\frac{1}{4}$ is optimal. Nowadays, there are many studies on the subject; c.f. the works \cite{BFT04},\cite{MS00}, \cite{MS97}, \cite{BFT18} in the Euclidean setting.

In Section \ref{Sec3}, Theorem \ref{thm.logHardyM}, we develop a unified strategy to prove a logarithmic Hardy inequality in an open set $M\subset \G$ of a stratified group $\G$, presuming the validity of the Hardy inequality with a suitable weight $w:M \rightarrow [0,\infty)$ in this setting. Logarithmic Hardy inequalities were introduced by Del Pino, Dolbeault, Filippas, and Tertikas in \cite{DDFT10}. 

The main ingredient to develop our method is the Sobolev inequality by Folland and Stein, see e.g. \cite{GV00}, that asserts that on any open set $\Omega\subset \mathbb{G}$, we have:
\begin{equation}
    \label{LS}
    \|u\|_{L^{p^{*}}(\Omega)}\leq A_p\left(\int_{\Omega}|\nabla_{H} u|^{p}dx\right)^{\frac{1}{p}},\; 1<p<Q,\; p^{*}=\frac{Qp}{Q-p}\,,\quad u\in C_{0}^{\infty}(\Omega)
\end{equation}
where $A_p>0$ depends on $p, \Omega$ and  $\G$, the vector-valued operator $\nabla_{H}$ stands for the horizontal gradient on $\G$, and $Q$ is its homogeneous dimension.  

Even though, there are several works on Hardy inequalities in the setting of stratified or even general Lie groups, c.f. \cite{CCR15}, \cite{L16}, \cite{RSS20}, and \cite{RS19} for a general overview of the topic, the study of their logarithmic counterpart has only been studied, to the best of our knowledge, only in  \cite{CKR21a}.

In Section \ref{Sec4}, Corollaries \ref{cor.Gplus} and \ref{cor.hor},  we employ our initial abstract approach and get two versions of the logarithmic Hardy inequality in the setting of a stratified group $\G$ of homogeneous dimension $Q \geq 3$, using the following two results that appeared in \cite[Theorem 8.1.3 (2)]{RS19} and \cite[Theorem 3.1]{RS17}, respectively.

\begin{thm}[Geometric Hardy inequality on $\G^{+}$]\label{thm.lh.rs1}
Let $\G^{+}$ be the half space on a stratified group $\G$. Then we have
\begin{equation}\label{hardy1}
\int_{\G^{+}}|\nabla_{H}u|^2\,dx \geq \frac{1}{4}\int_{\G^{+}}\frac{|u|^2}{\text{dist}(x,\partial\G^{+})^2}\,dx\,.
\end{equation}
\end{thm}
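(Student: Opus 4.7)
The strategy is a \emph{factorization} (Bessel-pair) argument, of the kind that produces the sharp constant $\tfrac14$ in the Euclidean half-space and that transfers essentially verbatim to the stratified setting. Write $d(x)=\text{dist}(x,\partial \mathbb{G}^{+})$ and fix $u\in C_0^\infty(\mathbb{G}^{+})$. Introduce the substitution $u=\phi v$ with $\phi(x)=d(x)^{1/2}$, so that
\begin{equation*}
|\nabla_{H} u|^2 = |\nabla_{H}\phi|^2\, v^2 + 2 v\phi\, \nabla_{H}\phi\cdot\nabla_{H} v + \phi^2\,|\nabla_{H} v|^2.
\end{equation*}
Integrating over $\mathbb{G}^{+}$ and using that the horizontal left-invariant vector fields $X_j$ are formally skew-adjoint in $L^2(\mathbb{G},dx)$, integration by parts on the cross term gives
\begin{equation*}
\int_{\mathbb{G}^{+}} 2 v\phi\, \nabla_{H}\phi\cdot\nabla_{H} v\, dx = -\int_{\mathbb{G}^{+}} v^2\bigl(|\nabla_{H}\phi|^2+\phi\,\mathcal{L}\phi\bigr)\,dx,
\end{equation*}
where $\mathcal{L}=\sum_j X_j^2$ is the sub-Laplacian; the compact support of $u$ strictly inside $\mathbb{G}^{+}$ eliminates all boundary contributions.

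Combining the two displays and discarding the non-negative term $\int_{\mathbb{G}^{+}} \phi^2\,|\nabla_{H} v|^2\,dx$ reduces the desired inequality to the pointwise bound
\begin{equation*}
-\frac{\mathcal{L}\phi(x)}{\phi(x)}\;\geq\;\frac{1}{4\, d(x)^{2}}.
\end{equation*}
The crucial observation is that equality holds above when $\phi = d^{1/2}$. Choose exponential coordinates of the first kind so that $\partial \mathbb{G}^{+}$ is realized as an affine hyperplane defined by the vanishing of a single horizontal coordinate, say $x_1$, giving $d(x)=x_1$. Since the horizontal vector fields $X_j$ act on functions that depend only on $x_1$ as $\delta_{j1}\partial_{x_1}$, one obtains $\mathcal{L}\phi(x)=\phi''(x_1)=-\tfrac14\, x_1^{-3/2}$, and therefore $-\mathcal{L}\phi/\phi=\tfrac14\, d(x)^{-2}$.

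The main obstacle I anticipate is the geometric identification of $\partial \mathbb{G}^{+}$ with the zero set of a single horizontal coordinate, which is precisely what makes $d$ depend on $x_1$ alone and causes $\mathcal{L}$, when applied to $\phi$, to collapse to the one-dimensional second derivative. Once this structural step is settled, the remaining calculations are direct. A secondary technical point is the justification of the integration by parts: it is immediate because $u\in C_0^\infty(\mathbb{G}^{+})$ forces $v=u/\phi$ to be compactly supported strictly inside $\mathbb{G}^{+}$, away from both $\partial \mathbb{G}^{+}$ and infinity.
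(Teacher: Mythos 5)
First, a point of reference: the paper does not prove Theorem \ref{thm.lh.rs1} at all; it is imported as a black box from \cite[Theorem 8.1.3 (2)]{RS19} and only \emph{used} as an input to the logarithmic and Poincar\'e-type results. So there is no internal proof to compare yours against, and I can only assess your argument on its own terms.

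Your factorization scheme is sound up to and including the reduction to the pointwise bound $-\mathcal{L}\phi/\phi\geq \tfrac14 d(x)^{-2}$ (the integration by parts is justified for exactly the reason you give). The genuine gap is the step you yourself flag as the ``main obstacle'': the claim that $\partial \mathbb{G}^{+}$ can be realized as the zero set of a single \emph{horizontal} coordinate. This is false in general. By the definition recalled in Section \ref{Sec2}, the Riemannian unit normal $v=(v_1,\dots,v_m)$ may have nonzero components in the higher strata, and automorphisms of $\G$ act stratum by stratum, so no admissible change of coordinates rotates such a direction into $\mathfrak{g}_1$. When $v$ is not purely horizontal, $d(x)=\langle x,v\rangle-d$ involves higher-stratum coordinates, $X_i d=v_{1,i}+\sum_{k\geq 2,\,l}p^{i}_{k,l}(x)v_{k,l}$ is non-constant, and the correct outcome of your computation (using that $\mathcal{L}d=0$ for the affine function $d$) is $-\mathcal{L}\phi/\phi=\tfrac14\,|\nabla_H d|^{2}\,d^{-2}$, which dominates $\tfrac14 d^{-2}$ only where $|\nabla_H d|\geq 1$. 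This is not a repairable technicality: for $\G=\mathbb{H}^{1}$ and $\G^{+}=\{t>0\}$ one has $d(x)=t$ and $|\nabla_H d|^{2}=\tfrac14(x^{2}+y^{2})$, which vanishes on the $t$-axis; worse, since $\{t>0\}$ is invariant under the anisotropic dilations and $t$ has homogeneity $2$ while $\nabla_H$ has homogeneity $1$, replacing $u$ by $u\circ\delta_\lambda$ scales the two sides of \eqref{hardy1} by different powers of $\lambda$, so no inequality of this form can hold on that half-space. Your argument (and the statement as literally quoted) is therefore valid only under the additional hypothesis that $v$ lies in the first stratum, $v=(v_1,0,\dots,0)$ with $|v_1|=1$: then $X_i d\equiv v_{1,i}$, $|\nabla_H d|\equiv 1$, your pointwise identity holds, and the proof closes. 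Either add that hypothesis explicitly, or state what your method actually proves in general, namely the inequality with weight $|\nabla_H d(x)|^{2}\,d(x)^{-2}$ in place of $d(x)^{-2}$.
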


\begin{thm}[Horizontal Hardy inequality on $\G$]\label{thm.lh.rs2}
Let $\G$ be a stratified group of homogeneous dimension $Q \geq 3$ and topological dimension $n$. Let $N$ be the dimension of the first stratum of its Lie algebra, i.e., for $x \in \G$ we can write $x=(x',x'') \in \R^{N}\times \R^{n-N}$. Then we have  
\begin{equation}\label{hardy2}
\int_{\G}|\nabla_{H}u|^2\,dx \geq \left(\frac{N-2}{2}\right)^{2}\int_{\G}\frac{|u|^2}{|x'|^2}\,dx\,,
\end{equation}
where $|x'|$ is the Euclidean norm of $x'$ on $\R^{N}$.
\end{thm}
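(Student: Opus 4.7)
The plan is to establish \eqref{hardy2} by the classical divergence/Cauchy--Schwarz scheme, adapted to the stratified setting through the triangular structure of the horizontal generators. In exponential coordinates, writing $x=(x',x'')\in\R^{N}\times\R^{n-N}$, each left-invariant vector field $X_{j}$ spanning the first stratum takes the form
\[
X_{j}=\partial_{x'_{j}}+\sum_{k}P_{jk}(x)\,\partial_{x''_{k}},\qquad j=1,\dots,N,
\]
with polynomial coefficients $P_{jk}$. The crucial consequence is that if $\varphi$ is any smooth function depending only on the first-stratum variables $x'$, then $X_{j}\varphi=\partial_{x'_{j}}\varphi$, because each $\partial_{x''_{k}}$ annihilates $\varphi$. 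This single structural fact makes a purely Euclidean radial vector field on $\R^{N}$ interact correctly with the horizontal gradient.

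With this in hand, I would take the horizontal vector field $W$ with components $W_{j}(x)=-x'_{j}/|x'|^{2}$ for $j=1,\dots,N$, and compute the horizontal divergence
\[
\mathrm{div}_{H}(W):=\sum_{j=1}^{N}X_{j}W_{j}=\sum_{j=1}^{N}\partial_{x'_{j}}\!\left(-\frac{x'_{j}}{|x'|^{2}}\right)=-\frac{N-2}{|x'|^{2}}.
\]
For $u\in C_{0}^{\infty}(\G\setminus\{x'=0\})$, the adjoint relation $X_{j}^{*}=-X_{j}$ on $L^{2}(\G,dx)$ (where $dx$ is the Haar measure, i.e.\ Lebesgue measure in exponential coordinates) yields the integration-by-parts identity
\[
-(N-2)\int_{\G}\frac{u^{2}}{|x'|^{2}}\,dx=\int_{\G}u^{2}\,\mathrm{div}_{H}(W)\,dx=-2\int_{\G}u\,(\nabla_{H}u)\cdot W\,dx.
\]
An application of the Cauchy--Schwarz inequality, together with the pointwise bound $|W|=1/|x'|$, gives
\[
|N-2|\int_{\G}\frac{u^{2}}{|x'|^{2}}\,dx\le 2\left(\int_{\G}|\nabla_{H}u|^{2}\,dx\right)^{\!1/2}\!\left(\int_{\G}\frac{u^{2}}{|x'|^{2}}\,dx\right)^{\!1/2},
\]
and squaring both sides yields \eqref{hardy2}. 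A standard capacity/density argument, relying on the fact that $\{x'=0\}$ has codimension $N$ in $\G$, then extends the inequality from $C_{0}^{\infty}(\G\setminus\{x'=0\})$ to the full class $C_{0}^{\infty}(\G)$.

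The main obstacle is the first step: verifying the reduction $X_{j}\varphi=\partial_{x'_{j}}\varphi$ for functions depending only on $x'$, which rests on the specific triangular form of the horizontal generators in stratified exponential coordinates and therefore on a careful unwinding of the Baker--Campbell--Hausdorff formula; once that has been granted, the remainder of the argument parallels the classical Euclidean Hardy inequality almost verbatim. A secondary technical point is the density extension across $\{x'=0\}$, which is routine in the codimension $N\ge 2$ regime where the inequality is of interest.
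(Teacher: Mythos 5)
The paper does not prove this statement itself: Theorem \ref{thm.lh.rs2} is quoted verbatim from \cite[Theorem 3.1]{RS17}, so there is no internal proof to compare against. Your argument is correct and is, in substance, the standard proof from that reference: the triangular form $X_{j}=\partial_{x'_{j}}+\sum_{k}P_{jk}(x)\partial_{x''_{k}}$ of the first-stratum generators reduces the computation of $\mathrm{div}_{H}W$ for $W=-x'/|x'|^{2}$ to the Euclidean one on $\R^{N}$, the skew-symmetry $X_{j}^{*}=-X_{j}$ (which holds because $P_{jk}$ never depends on $x''_{k}$, so each $X_{j}$ is divergence-free with respect to Haar/Lebesgue measure) justifies the integration by parts, and Cauchy--Schwarz closes the estimate with the constant $\bigl(\tfrac{N-2}{2}\bigr)^{2}$. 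The only point to be careful about is the final density step: passing from $C_{0}^{\infty}(\G\setminus\{x'=0\})$ to $C_{0}^{\infty}(\G)$ requires $N\geq 3$ (so that $|x'|^{-2}$ is locally integrable on $\R^{N}$ and $\{x'=0\}$ has zero $H^{1}$-capacity), which is the hypothesis in the cited source; the paper's stated assumption $Q\geq 3$ does not by itself guarantee this, although for $N\leq 2$ the inequality is vacuous or must be read on $C_{0}^{\infty}(\G\setminus\{x'=0\})$. With that caveat made explicit, your proof is complete.
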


\subsection{Poincar\'e and Hardy-Poincar\'e inequalities}
Recall the classical probabilistic Poincar\'e inequality on $\R^n$:
\begin{equation}
    \label{Poi}
    \int_{\R^n}|f-\mu(f)|^{q}\,d\mu \leq  \mu(|\nabla f|^q)\,, \quad q \geq 1\,,
\end{equation}
where we have denoted $\mu(f):=\int_{\R^n}f\,d\mu$, and $d\mu$ stands for the Gaussian measure on $\R^n$. For the same measure, Beckner  in \cite{Bec89}, see also \cite{BD06}, proved the so-called ``generalised Poincar\'e inequality'' which reads as follows:  
\begin{equation}\label{Poi.bec}
\frac{1}{2-p}\left[\int_{\R^n}f^2\,d\mu-\left(\int_{\R^n}|f|^p\,d\mu \right)^{\frac{2}{p}} \right]\leq \int_{\R^n}|\nabla f|^2\,d\mu\,, \quad p \geq 1\,.
\end{equation}
To justify the terminology used to for the inequality \eqref{Poi.bec}, just observe that for such $\mu$ we have:
\begin{equation}\label{equiv.Poi}
\int_{\R^n}f^2\,d\mu-\left(\int_{\R^n}f\,d\mu\right)^2=\int_{\R^n}(f-\mu(f))^2\,d\mu\,.
\end{equation}
and so \eqref{Poi.bec} is equivalent when $p=1$ to the ``standard''   Poincar\'e inequality \eqref{Poi} for $q=2$. More generally, \eqref{equiv.Poi} is true on any measure space $\mathbb{X}$ when equipped with a probability measure $d\nu$ such that $\nu(\mathbb{X})=1$.

In Section \ref{Sec3}, Theorem \ref{thm.poi.M}, we develop a unified method to obtain a generalised (in the sense of \eqref{Poi.bec}) Hardy-Poincar\'e inequality on open set $M \subset \G$ of a stratified group $\G$, presuming the validity of the Hardy inequality with a suitable weight $w:M \rightarrow [0,\infty)$, and using the logarithmic version of the latter. Let us note that by ``generalised Hardy-Poincare'' inequality we regard an inequality of the form \eqref{Poi.bec} under the addition of the weight $w$ inherited by the corresponding Hardy inequality and in the setting of a stratified group $\G$. Let us mention that Hardy-Poincar\'e inequalities have also been considered in other settings, see for example \cite{DF22}, \cite{BBDGV07}.

In Section \ref{Sec4}, we apply the abstract theory developed earlier in Section \ref{Sec3}, and get several versions of the Hardy-Poincar\'e inequality  either on the half-space $\G^{+}$, or any open set $M \subset \G$, of a stratified group $\G$, when the regarded weight $w$ is the one appearing in \eqref{hardy1} or \eqref{hardy2}, respectively. In Theorem \ref{thm.gaus.Hardy.G} we show a version of the a ``horizontal Gross-type Hardy'' inequality with respect to the weight $w=w(x):=|x'|$ as in \eqref{hardy2}, and to a measure that is Gaussian only on the first stratum of the group $\G$. Consequently, and for the same measure, in Corollary \ref{cor.hor.poi}, we prove the ``horizontal'' version of the Hardy-Poincar\'e inequality in the stratified setting. Interestingly, as an application of the latter in the case where $\G=\R^n$, in Corollary \ref{cor1.poi}, we get a weighted version of the standard probabilistic Poincar\'e inequality \eqref{Poi} when $q=2$.

\section{Preliminaries}\label{Sec2}
An important class of nilpotent Lie algebras that allows for a dilation structure is that of graded Lie algebras. A graded Lie algebra $\mathfrak{g}$  allows for a vector space decomposition of the form $\mathfrak{g}=\oplus_{i=1}^{\infty} \mathfrak{g_i}$, where all, but finitely many $\mathfrak{g_i}$'s are $\{0\}$, and where $[\mathfrak{g_i},\mathfrak{g_j}]\subset \mathfrak{g_{i+j}}$ for all $i,j \in \mathbb{N}$. Recall that any connected, simply connected nilpotent Lie group with a graded Lie algebra is also called graded. Graded, or more generally homogeneous, Lie groups were initiated by Folland and Stein in \cite{FS82}; see also the monograph \cite{FR16} by Fischer and Ruzhansky for a detailed overview of the subject.

The so-called canonical dilation structure of a graded Lie algebra (and so also of a graded Lie group) arises from the matrix $A$ which is such that $AX'=j X'$, for $X' \in \mathfrak{g}_j$.  In this case the canonical dilations on $\mathfrak{g}$ are given by $D_r(X')=r^{j}X'$, for $X' \in \mathfrak{g_j}$, for $r>0$. The mapping $D_r$, for $r>0$, is an automorphism of $\mathfrak{g}$, and consequently also of the corresponding to $\mathfrak{g}$, graded Lie group, say $\G$. Hence if we write $x \in \G$ as
\[
x=(x^{(n_1)}, \ldots,x^{(n_{j_r})})\,, \quad \text{where} \quad x^{(n_j)}\in \R^{n_j}\,,
\]
then we have 
\[
D_r(x)=(rx^{(n_1)}, \ldots,r^{j_r}x^{(n_{j_r})})\,.
\]
For graded Lie groups, the quantity $n \leq Q:=\text{Tr}A$ is called the homogeneous dimension of $\G$. 

Additionally, if the vector fields $\{X_1,\cdots,X_{n_1}\}$ from $\mathfrak{g}_1$ generate, under the iterated Lie bracket operation $[\cdot,\cdot]: \mathfrak{g} \times \mathfrak{g} \rightarrow \mathfrak{g}$, the whole of $\mathfrak{g}$, then both $\mathfrak{g}$ and $\G$ are called stratified. In this case,  the dimension of the first stratum $\mathfrak{g}_1$ is $n_1$. For a stratified Lie group $\G$ the hypoelliptic vector valued operator $\nabla_{H}:=(X_1,\cdots,X_{n_1})$ is the extension of the gradient in this hypoelliptic setting, and is often called the horizontal gradient on $\G$.  

On a stratified Lie group $\G$, Garofalo in \cite{Gar08} defined the half-space of $\G$ by 
\[
\G^{+}:=\{ x\in \G : \langle x,v\rangle>d\}\,,
\]
where $d \in \R$, and the Riemannian outer unit normal to $\partial\G^{+}$ vector $v$ is given by $v=(v_1,\cdots,v_m)$, with $v_j \in \R^{n_j}$, for $j=1,\cdots,m$, where $m$ is the number of strata of  $\mathfrak{g}$. The Euclidean distance of an element $x \in \G$ to the boundary $\partial\G^{+}$, denoted by $\dd$ is defined as 
\[
\dd:=\langle x,v \rangle -d\,.
\]

\section{abstract results on the geometric Hardy and the geometric Hardy-Poincar\'e inequalities}\label{Sec3}
In this section we develop the general strategy to obtain a version of the geometric logarithmic Hardy and of the geometric Hardy-Poincar\'e inequality in the setting of a stratified group, assuming the we have the corresponding version of the Hardy inequality at our disposal.

\begin{thm}[Hardy Sobolev inequality]\label{thm.hardy.sM}
    For $\G$ being a stratified group of homogeneous dimension $Q \geq 3$, we let $M \subset \G$ be any open set in $\G$. Assume that on $M$ the following Hardy inequality holds true
    \begin{equation}\label{hardyM}
\int_{M}|\nabla_{H}u|^2\,dx \geq C_{H}\int_{M}\frac{|u|^2}{w(x)^2}\,dx\,,
\end{equation}
for some measurable, non-negative function $w$ on $M$ possible vanishing at a set of measure zero, and all $u \in C_{0}^{\infty}(M)$. Then  for $0\leq \beta \leq 2$, and for all $u \in C_{0}^{\infty}(M)$ we have
 \begin{equation}
        \label{hardy.sM}
         \int_{M}\frac{|u(x)|^{\frac{2(Q-\beta)}{Q-2}}}{\d^{\beta}}dx \leq  A_{2}^{\frac{Q(2-\beta)}{Q-2}} C_{H}^{-\frac{\beta}{2}} \left( \int_{M}|\nabla_{H}u|^2\,dx\right)^{\frac{(Q-\beta)}{Q-2}}\,,
    \end{equation}
    where $A_2$ is the constant in \eqref{LS} for $p=2$ and $\Omega=M$.
\end{thm}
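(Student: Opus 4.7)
The plan is to read the claimed inequality as a H\"older interpolation between the two endpoint cases: at $\beta=2$ it reduces (after noting $(Q-\beta)/(Q-2)=1$ and $Q(2-\beta)/(Q-2)=0$) to the Hardy inequality \eqref{hardyM}, and at $\beta=0$ it reduces to the Folland--Stein Sobolev inequality \eqref{LS} with $p=2$ raised to the power $2^{*}=2Q/(Q-2)$. For $0<\beta<2$ I would obtain \eqref{hardy.sM} by interpolating these two endpoint estimates with a single application of H\"older.

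The key algebraic observation is the identity
\[
\frac{|u|^{\frac{2(Q-\beta)}{Q-2}}}{w(x)^{\beta}}
\;=\;\left(\frac{|u|^{2}}{w(x)^{2}}\right)^{\!\beta/2}\cdot\,|u|^{\frac{Q(2-\beta)}{Q-2}},
\]
which follows from the exponent computation $\frac{2(Q-\beta)}{Q-2}-\beta=\frac{Q(2-\beta)}{Q-2}$. To this product I would apply H\"older's inequality with conjugate exponents $p=2/\beta$ and $p'=2/(2-\beta)$, which yields
\[
\int_{M}\frac{|u|^{\frac{2(Q-\beta)}{Q-2}}}{w(x)^{\beta}}\,dx
\;\leq\;
\left(\int_{M}\frac{|u|^{2}}{w(x)^{2}}\,dx\right)^{\!\beta/2}\!
\left(\int_{M}|u|^{\frac{2Q}{Q-2}}\,dx\right)^{\!(2-\beta)/2}.
\]

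Next I would invoke the Hardy inequality \eqref{hardyM} in the first factor to replace $\int_{M}|u|^{2}/w^{2}\,dx$ by $C_{H}^{-1}\int_{M}|\nabla_{H}u|^{2}\,dx$, and the Sobolev inequality \eqref{LS} (raised to the $2^{*}$ power) in the second factor to replace $\int_{M}|u|^{2Q/(Q-2)}\,dx$ by $A_{2}^{2Q/(Q-2)}\bigl(\int_{M}|\nabla_{H}u|^{2}\,dx\bigr)^{Q/(Q-2)}$. Collecting the two contributions of $\int_{M}|\nabla_{H}u|^{2}\,dx$, the resulting exponent is
\[
\frac{\beta}{2}+\frac{Q(2-\beta)}{2(Q-2)}\;=\;\frac{\beta(Q-2)+Q(2-\beta)}{2(Q-2)}\;=\;\frac{Q-\beta}{Q-2},
\]
and the accumulated constants are exactly $C_{H}^{-\beta/2}A_{2}^{Q(2-\beta)/(Q-2)}$, matching \eqref{hardy.sM}.

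There is essentially no obstacle; the endpoint cases $\beta\in\{0,2\}$ are handled directly (they are the Sobolev and Hardy inequalities themselves, with H\"older degenerating trivially), and the hypothesis that $w$ may vanish only on a set of measure zero makes the integrands well-defined almost everywhere on $\mathrm{supp}(u)$. The only mild subtlety is to verify the exponent arithmetic, which is exactly the identity displayed above.
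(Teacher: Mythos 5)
Your proposal is correct and coincides with the paper's own argument: the same splitting of the integrand, the same H\"older application with exponents $2/\beta$ and $2/(2-\beta)$, followed by the Hardy inequality \eqref{hardyM} on the first factor and the Sobolev inequality \eqref{LS} on the second, with identical exponent arithmetic. Nothing further is needed.
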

\begin{proof}
   Using H\"{o}lder's inequality with $\frac{\beta}{2}+\frac{2-\beta}{2}=1$, we get
    \begin{equation*}
    \begin{split}
        \int_{M}\frac{|u(x)|^{\frac{2(Q-\beta)}{Q-2}}}{\d^{\beta}}dx&=\int_{M}\frac{|u(x)|^{\beta}|u(x)|^{\frac{2(Q-\beta)}{Q-2}-\beta}}{\d^{\beta}}dx\\&
\leq \left(\int_{M}\frac{|u(x)|^{2}}{\d^{2}}dx\right)^{\frac{\beta}{2}}\left(\int_{M}|u(x)|^{\frac{2Q}{Q-2}}\,dx\right)^{\frac{2-\beta}{2}}\\&
        \stackrel{\eqref{hardyM}}\leq  \left(C_{H}^{-1} \int_{M}|\nabla_{H}u|^2\,dx\right)^{\frac{\beta}{2}}
      \left(A_{2}^{2}\int_{M}|\nabla_{H}u|^2\,dx \right)^{\frac{Q(2-\beta)}{2(Q-2)}}\\&
      = A_{2}^{\frac{Q(2-\beta)}{Q-2}} C_{H}^{-\frac{\beta}{2}}
      \left(\int_{M}|\nabla_{H}u|^2\,dx \right)^{\frac{\beta}{2}+\frac{Q(2-\beta)}{2(Q-2)}}\,,
    \end{split}
    \end{equation*}
    where for the last inequality we have used the Sobolev inequality \eqref{LS} for $p=2$, and we have shown \eqref{hardy.sM} since $\frac{\beta}{2}+\frac{Q(2-\beta)}{2(Q-2)}=\frac{Q-\beta}{Q-2}$
    completing the proof.
\end{proof}
\begin{rem}
The Hardy-Sobolev inequality \eqref{hardy.sM} gives, for $\beta=0$, the Sobolev inequality \eqref{LS} on $M$, and for $\beta=2$, the geometric Hardy inequality \eqref{hardyM}.
\end{rem}
\begin{rem}\label{rem.Hn}
    In the case of the Heisenberg group $\mathbb{H}^n$ the constant $A_2$ as in \eqref{LS} is explicit, and in particular we have $A_2=\frac{(n!)^{\frac{1}{n+1}}}{\pi n^{2}}$. The latter is due to the optimal constant for the Sobolev inequality by Jerison and Lee \cite{JL88} in the aforesaid setting. In the Euclidean space $\R^n$ we have $A_2=(\pi n^2-2\pi n)^{-1/2}\frac{\Gamma(n)}{\Gamma(n/2)}$, see \cite{T76}, and $A_2$ is optimal.
\end{rem}
\begin{thm}[Geometric logarithmic Hardy inequality]\label{thm.logHardyM}
Let $\G$ be  a stratified group of homogeneous dimension $Q\geq  3$, and let $M \subset \G$ be an open set in $\G$. Assume also that \eqref{hardyM} holds true for some non-negative measurable function $w$ on $M$. Then, for $0\leq \beta < 2$, and for all $u \in C_{0}^{\infty}(M)$ we have
    
     \begin{equation}\label{log.hardyM}
\begin{split}
    &\int_{M}\frac{\frac{|u|^{2}}{\d^{2-\beta}}}{\left\|\frac{u}{w^{\frac{2-\beta}{2}}}\right\|_{L^2(M)}^2}\log\left(\frac{|u|^{2}\d^{2Q-4}}{\left\|\frac{u}{w^{\frac{2-\beta}{2}}}\right\|_{L^2(M)}^2}\right)\,dx \\ &\leq\frac{Q-\beta}{2-\beta}\log\left(\frac{C_{LH}\int_{M}|\nabla_{H}u|^2\,dx }{\left\|\frac{u}{w^{\frac{2-\beta}{2}}}\right\|_{L^2(M)}^2}\right)\,,
\end{split}
\end{equation}
where  
 \begin{equation}\label{CQbM}
   C_{LH}:=A_{2}^{\frac{2Q(2-\beta)}{Q-\beta}}C_{H}^{-\frac{\beta(Q-2)}{Q-\beta}}\,.
   \end{equation}
 In particular, when $\left\|\frac{u}{w^{\frac{2-\beta}{2}}}\right\|_{L^2(M)}=1$, we have 
 
\begin{equation}\label{L1=1}
        \int_{M}\frac{|u|^{2}}{\d^{2-\beta}}\log \left(|u(x)| \d^{Q-2} \right)\,dx
        \leq \frac{Q-\beta}{2(2-\beta)} \log \left(C_{LH}\|\nabla_{H}u\|^{2}_{L^2(M)}\right)\,.
\end{equation}
\end{thm}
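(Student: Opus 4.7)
The plan is to derive \eqref{log.hardyM} from the Hardy--Sobolev estimate of Theorem~\ref{thm.hardy.sM} by an application of Jensen's inequality to the concave function $\log$, with respect to the natural probability measure built out of the weight $w^{2-\beta}$. The proof of the special case \eqref{L1=1} will then fall out of \eqref{log.hardyM} by setting the $L^2$-normalisation equal to $1$ and using $\log(x^2)=2\log x$.

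Let me abbreviate $p:=(Q-\beta)/(Q-2)$ and $K:=A_2^{Q(2-\beta)/(Q-2)}\,C_H^{-\beta/2}$, so that Theorem~\ref{thm.hardy.sM} reads $\int_M |u|^{2p}w^{-\beta}\,dx\le K\,\|\nabla_H u\|_{L^2(M)}^{2p}$. I may assume $I:=\|u/w^{(2-\beta)/2}\|_{L^2(M)}^{2}>0$, since otherwise $u\equiv 0$ and the statement is vacuous. The first step will be to introduce the probability measure
\[
d\nu:=\frac{1}{I}\,\frac{|u|^{2}}{w^{2-\beta}}\,dx\qquad\text{on }M,
\]
which is well defined because the integrand of $I$ is precisely its density. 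The key factorisation that couples \eqref{hardy.sM} to $d\nu$ is
\[
\frac{|u|^{2p}}{w^{\beta}} \;=\; \frac{|u|^{2}}{w^{2-\beta}}\cdot |u|^{2(p-1)}\,w^{2-2\beta},
\]
so that after dividing \eqref{hardy.sM} by $I$ I would obtain $\int_M |u|^{2(p-1)}w^{2-2\beta}\,d\nu \le K\,\|\nabla_H u\|_{L^2(M)}^{2p}/I$.

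Next, I would apply Jensen's inequality to the concave logarithm against the probability measure $d\nu$:
\[
\int_M \log\!\bigl(|u|^{2(p-1)}\,w^{2-2\beta}\bigr)\,d\nu \;\le\; \log\!\int_M |u|^{2(p-1)}w^{2-2\beta}\,d\nu \;\le\; \log\frac{K\,\|\nabla_H u\|_{L^2(M)}^{2p}}{I}.
\]
Expanding the logarithm on the left into the sum $2(p-1)\int\log|u|\,d\nu+(2-2\beta)\int\log w\,d\nu$ and scaling the whole inequality by the positive factor $(Q-2)/(2-\beta)$ turns the coefficient $2(p-1)=2(2-\beta)/(Q-2)$ in front of $\log|u|$ into $2$ and promotes $p$ on the right into $(Q-\beta)/(2-\beta)$, as required for the statement. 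Re-absorbing the $-\log I$ term into the argument of the logarithm on the left and exponentiating $K$ to the corresponding power produces the constant $K^{2(Q-2)/(Q-\beta)}$, which matches the expression \eqref{CQbM} for $C_{LH}$. Finally, specialising to $I=1$ in the resulting \eqref{log.hardyM} and applying $\log(|u|^{2}w^{2Q-4})=2\log(|u|w^{Q-2})$ yields \eqref{L1=1} after dividing by $2$.

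The main obstacle I anticipate is the algebraic bookkeeping of the exponents in the final rescaling: the $|u|$-exponent, the $w$-exponent, and the power of $K$ all transform under multiplication by $(Q-2)/(2-\beta)$, and one has to verify that these together with the $\log I$ correction assemble exactly into the right-hand side of \eqref{log.hardyM} and the definition of $C_{LH}$. No new analytic ingredient beyond Jensen's inequality and Theorem~\ref{thm.hardy.sM} is needed; the only genuinely non-trivial input was the Sobolev inequality \eqref{LS} used to establish \eqref{hardy.sM}.
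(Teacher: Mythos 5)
Your overall strategy --- interpolating against the Hardy--Sobolev endpoint of Theorem~\ref{thm.hardy.sM} and extracting the logarithm --- is the right family of ideas, but the fixed-exponent Jensen shortcut produces the wrong power of $w$ inside the logarithm, and no rescaling can repair it. Concretely, with $p=\frac{Q-\beta}{Q-2}$ and $d\nu=\frac{1}{I}\frac{|u|^{2}}{w^{2-\beta}}\,dx$, your factorisation and Jensen give
\[
\int_{M}\log\bigl(|u|^{2(p-1)}w^{2-2\beta}\bigr)\,d\nu
=\frac{2(2-\beta)}{Q-2}\int_{M}\log|u|\,d\nu+(2-2\beta)\int_{M}\log w\,d\nu
\;\leq\;\log\frac{K\,\|\nabla_{H}u\|_{L^{2}(M)}^{2p}}{I}\,.
\]
After multiplying by $\frac{Q-2}{2-\beta}$ to normalise the coefficient of $\log|u|$ to $2$, the coefficient of $\int\log w\,d\nu$ becomes $\frac{2(1-\beta)(Q-2)}{2-\beta}$, whereas \eqref{log.hardyM} requires $2Q-4=2(Q-2)$. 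These never agree for $\beta\in[0,2)$: already at $\beta=0$ they differ by a factor of $2$, and at $\beta=1$ your coefficient vanishes. The discrepancy is the term $\frac{2(Q-2)}{2-\beta}\int\log w\,d\nu$, which has no definite sign and so cannot be absorbed. What your argument actually proves is an inequality with $w^{\frac{2(1-\beta)(Q-2)}{2-\beta}}$ in place of $w^{2Q-4}$ inside the logarithm --- essentially the inequality \eqref{Gh.CKR} of \cite{CKR21a} recalled later in the paper, which is a genuinely different statement. A secondary bookkeeping issue: under your scaling the power of $K$ comes out as $K^{(Q-2)/(Q-\beta)}=C_{LH}^{1/2}$, not the claimed $K^{2(Q-2)/(Q-\beta)}=C_{LH}$.

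The reason the paper obtains the exponent $2Q-4$ is that it does not interpolate against a fixed pair of quantities. It sets up the one-parameter family of H\"older inequalities \eqref{omega*}, in which the exponent of $w$ in the $L^{2}$-type factor, namely $2-\beta\frac{\omega^{*}-2}{\omega^{*}-q}$, itself depends on the interpolation parameter $q$; the family becomes an identity at $q=2$, and differentiating there yields the logarithmic inequality. The derivative of that $q$-dependent exponent is precisely what contributes the additional $\int_{M}\frac{|u|^{2}}{w^{2-\beta}}\log w\,dx$ term (the quantity $I_{3}$ in \eqref{I}) that a Jensen argument with a $q$-independent weight cannot see. To salvage your approach you would have to build this $q$-dependence of the weight exponent into the interpolation before taking logarithms; as written, the proposal proves a different theorem.
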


\begin{proof}
Let us introduce the auxiliary parameters $\omega^{*}:=\frac{2(Q-\beta)}{Q-2}$ and $q \in (2, \omega^{*})$. For some fixed $q$, we set $\omega=\frac{2(Q-\beta)}{2-\beta}-\frac{q(Q-2)}{2-\beta}$, and notice that 
\begin{equation}\label{eq.omega}
    \beta \frac{2-\omega}{2}-\beta=-\frac{\omega \beta}{2}\,,
\end{equation}
and 
\begin{equation}
    \label{eq.omega2}
    Q-\beta-\frac{q}{2}(Q-2)-\omega=-\frac{\omega \beta}{2}\,.
\end{equation}
 Hence by H\"{o}lder's inequality for $\frac{\omega}{2}+\frac{2-\omega}{2}=1$ we have 
\begin{equation*}
    \begin{split}
        \int_{M}\frac{|u|^{q}}{\d^{Q-\beta-\frac{q}{2}(Q-2)}}\,dx &=\int_{M}\frac{|u|^{\omega}}{\d^{\omega }}\frac{|u|^{q-\omega}}{\d^{Q-\beta-\frac{q}{2}(Q-2)-\omega }}\,dx\\&
       \stackrel{\eqref{eq.omega2}} =\int_{M}\frac{|u|^{\omega}}{\d^{\omega }}\frac{|u|^{q-\omega}}{\d^{-\frac{\omega \beta}{2}}}\,dx\\&
        \stackrel{\eqref{eq.omega}}  =\int_{M}\frac{|u|^{\omega}}{\d^{\omega-\beta}}\frac{|u|^{q-\omega}}{\d^{\beta\frac{2-\omega}{2}}}d\lambda(x)\\&
        \leq \left(\int_{M}\frac{|u|^{2}}{\d^{2-\frac{2\beta }{\omega}}}\,dx\right)^{\frac{\omega}{2}}\left(\int_{M}\frac{|u|^{\frac{2(q-\omega)}{2-\omega}}}{\d^{\beta}}\,dx\right)^{\frac{2-\omega}{2}}\,.
    \end{split}
\end{equation*}
Now, under  the observation that 
\[
\frac{\omega}{2}=\frac{\omega^{*}-q}{\omega^{*}-2} \quad \text{and} \quad \frac{2(q-\omega)}{2-\omega}=\omega^{*}\,,
\]
the above inequalities can be summarised as
\begin{multline}\label{omega*}
        \int_{M}\frac{|u|^{q}}{\d^{Q-\beta-\frac{q}{2}(Q-2)}}\,dx
         \leq \left(\int_{M}\frac{|u|^{2}}{\d^{2-\beta\frac{\omega^{*}-2}{\omega^{*}-q}}}\,dx\right)^{\frac{\omega^{*}-q}{\omega^{*}-2}} \\
       \times  \left(\int_{M}\frac{|u|^{\omega^{*}}}{\d^{\beta}}\,dx\right)^{\frac{q-2}{\omega^{*}-2}}\,.
\end{multline}
The inequality \eqref{omega*} for $q=2+\delta$, can be rewritten as 
\begin{multline}\label{fordelta}
        \int_{M}\frac{|u|^{2+\delta}}{\d^{Q-\beta-\frac{2+\delta}{2}(Q-2)}}\,dx
         \leq \left(\int_{M}\frac{|u|^{2}}{\d^{2-\beta\frac{\omega^{*}-2}{\omega^{*}-(2+\delta)}}}\,dx\right)^{\frac{\omega^{*}-(2+\delta)}{\omega^{*}-2}} \\
         \left(\int_{M}\frac{|u|^{\omega^{*}} }{\d^{\beta}}\,dx\right)^{\frac{(2+\delta)-2}{\omega^{*}-2}}\,.
\end{multline}
Taking the limit as $\delta \rightarrow 0$ of \eqref{fordelta} we get 
\begin{multline}\label{delta=0}
         \int_{M}\frac{|u|^{2}}{\d^{Q-\beta-\frac{2}{2}(Q-2)}}\,dx
         \leq \left(\int_{M}\frac{|u|^{2}}{\d^{2-\beta\frac{\omega^{*}-2}{\omega^{*}-2}}}\,dx\right)^{\frac{\omega^{*}-2}{\omega^{*}-2}} \\
         \left(\int_{M}\frac{|u|^{\omega^{*}}}{\d^{\beta}}\,dx\right)^{\frac{2-2}{\omega^{*}-2}}\,.
\end{multline}
Observe that the second term of the right hand side of \eqref{delta=0} equals to $1$, while  the left hand side and the first term of the right hand side of \eqref{delta=0} are equal. The latter implies that \eqref{delta=0} holds true as an equality, i.e., we have  
\begin{multline}\label{delta.eq}
         \int_{M}\frac{|u|^{2}}{\d^{Q-\beta-\frac{2}{2}(Q-2)}}\,dx
         =\left(\int_{M}\frac{|u|^{2}}{\d^{2-\beta\frac{\omega^{*}-2}{\omega^{*}-2}}}\,dx\right)^{\frac{\omega^{*}-2}{\omega^{*}-2}} \\
         \left(\int_{M}\frac{|u|^{\omega^{*}}}{\d^{\beta}}\,dx\right)^{\frac{2-2}{\omega^{*}-2}}\,.
\end{multline}
Using \eqref{delta.eq} and \eqref{fordelta}, we get 
\begin{equation}\label{difference}
\begin{split}
        &\lim_{\delta\rightarrow 0}\frac{1}{\delta}  \int_{M} \left(\frac{|u|^{2+\delta}}{\d^{Q-\beta-\frac{2+\delta}{2}(Q-2)}}- \frac{|u|^{2}}{\d^{Q-\beta-\frac{2}{2}(Q-2)}}\right)\,dx \\&
        \leq \lim_{\delta\rightarrow 0}\frac{1}{\delta}\Biggl{[}\left(\int_{M}\frac{|u|^{2}}{\d^{2-\beta\frac{\omega^{*}-2}{\omega^{*}-(2+\delta)}}}\,dx\right)^{\frac{\omega^{*}-(2+\delta)}{\omega^{*}-2}} 
         \left(\int_{M}\frac{|u|^{\omega^{*}}}{\d^{\beta}}\,dx\right)^{\frac{(2+\delta)-2}{\omega^{*}-2}}\\&
        -\left(\int_{M}\frac{|u|^{2}}{\d^{2-\beta\frac{\omega^{*}-2}{\omega^{*}-2}}}\,dx\right)^{\frac{\omega^{*}-2}{\omega^{*}-2}} \left(\int_{M}\frac{|u|^{\omega^{*}}}{\d^{\beta}}\,dx\right)^{\frac{2-2}{\omega^{*}-2}}\Biggl{]}:=\lim_{\delta\rightarrow 0}\frac{1}{\delta} I(\delta).
\end{split}
\end{equation}
Using the l'H\^{o}pital rule in the variable $\delta$, we compute the left-hand side of the inequality \eqref{difference}:
\begin{equation}\label{delH1}
    \begin{split}
       & \lim_{\delta\rightarrow 0}\frac{1}{\delta} \left(\frac{|u|^{2+\delta}}{\d^{Q-\beta-\frac{2+\delta}{2}(Q-2)}}- \frac{|u|^{2}}{\d^{Q-\beta-\frac{2}{2}(Q-2)}}\right)\\&=\lim_{\delta\rightarrow 0}\frac{d}{d\delta}\frac{|u|^{\delta+2}}{\d^{Q-\beta-\frac{(\delta+2)}{2}(Q-2)}}\\&
        =\frac{|u|^{2}\log(|u|^{2}\d^{Q-2})}{2\d^{2-\beta}}\,.
    \end{split}
\end{equation}
Using the  l'H\^{o}pital rule we will also compute the derivative of the right hand side of \eqref{difference}. To this end, let us first compute the derivative 
\[
\frac{d}{d\delta} \left(\int_{M}\frac{|u|^{2}}{\d^{2-\beta\frac{\omega^{*}-2}{\omega^{*}-(2+\delta)}}}\,dx\right)^{\frac{\omega^{*}-(2+\delta)}{\omega^{*}-2}} := \frac{d}{d\delta} (f(\delta)^{g(\delta)})\,,
\]
where we have defined 
\[
f(\delta):=\int_{M}\frac{|u|^{2}}{\d^{2-\beta\frac{\omega^{*}-2}{\omega^{*}-(2+\delta)}}}\,dx\,,
\]
and 
\[
g(\delta):= \frac{\omega^{*}-(2+\delta)}{\omega^{*}-2}\,.
\]
Since 
\begin{equation}\label{df}
    \begin{split}
       \frac{df(\delta)}{d\delta}&=\frac{d}{d\delta}\int_{M}\frac{|u|^{2}}{\d^{2-\beta\frac{\omega^{*}-2}{\omega^{*}-(\delta+2)}}}\,dx\\&
       =\frac{(\beta-2)(\omega^{*}-2)}{(\omega^{*}-(\delta+2))^{2}}\int_{M}\frac{|u|^{2}}{\d^{2-\beta\frac{\omega^{*}-2}{\omega^{*}-(\delta+2)}}}\log(\d)\,dx\,,
    \end{split}
\end{equation}
and 
\begin{equation}\label{dg}
    \frac{dg(\delta)}{d\delta}=\frac{d}{d\delta}\frac{\omega^{*}-(\delta+2)}{\omega^{*}-2}=-\frac{1}{\omega^{*}-2}\,,
\end{equation}
we have
\begin{equation}\label{dfg}
\begin{split}
    \frac{d}{d\delta} (f(\delta)^{g(\delta)})&=(f(\delta)^{g(\delta)}\left(\frac{dg(\delta)}{d\delta}\log(f(\delta))+\frac{g(\delta)\frac{df(\delta)}{d\delta}}{f(\delta)}\right)\\&
    \stackrel{\eqref{df},\eqref{dg}}=(f(\delta))^{g(\delta)}\Biggl{(}-\frac{1}{\omega^{*}-2}\log\left(\int_{M}\frac{|u|^{2}}{\d^{2-\beta\frac{\omega^{*}-2}{\omega^{*}-(\delta+2)}}}\,dx\right)\\&
    +\frac{\beta-2}{\omega^{*}-(\delta+2)}\frac{\int_{M}\frac{|u|^{2}}{\d^{2-\beta\frac{\omega^{*}-2}{\omega^{*}-(\delta+2)}}}\log(\d)\,dx}{\int_{M}\frac{|u|^{2}}{\d^{2-\beta\frac{\omega^{*}-2}{\omega^{*}-(\delta+2)}}}\,dx}\Biggl{)}.
\end{split}
\end{equation}
The following derivative will also be used in the computations that follow
\begin{equation}\label{der2}
    \begin{split}
      \frac{d}{d\delta}\left(\int_{M}\frac{|u|^{\omega^{*}}}{\d^{\beta}}\,dx\right)^{\frac{(\delta+2)-2}{\omega^{*}-2}}&= \frac{1}{\omega^{*}-2}\left(\int_{M}\frac{|u|^{\omega^{*}}}{\d^{\beta}}\,dx\right)^{\frac{(\delta+2)-2}{\omega^{*}-2}}\\ &
      \times \log\left(\int_{M}\frac{|u|^{\omega^{*}}}{\d^{\beta}}\,dx\right)\,.
    \end{split}
\end{equation}
Now, for the right hand side of \eqref{difference} we have 
\begin{equation}\label{I}
    \begin{split}
        &\lim_{\delta\rightarrow 0}\frac{1}{\delta} I(\delta)
        =\lim_{\delta\rightarrow 0}\frac{d}{d\delta}\left[\left(\int_{M}\frac{|u|^{2}}{\d^{2-\beta\frac{\omega^{*}-2}{\omega^{*}-(\delta+2)}}}\,dx\right)^{\frac{\omega^{*}-(\delta+2)}{\omega^{*}-2}}\left(\int_{M}\frac{|u|^{\omega^{*}}}{\d^{\beta}}\,dx\right)^{\frac{(\delta+2)-2}{\omega^{*}-2}}\right]\\&
        =\lim_{\delta\rightarrow 0}\frac{d}{d\delta}\left((f(\delta))^{g(\delta)}\left(\int_{M}\frac{|u|^{\omega^{*}}}{\d^{\beta}}\,dx\right)^{\frac{(\delta+2)-2}{\omega^{*}-2}}\right)\\&
        =\lim_{\delta\rightarrow 0}\left[\frac{d(f(\delta))^{g(\delta)}}{d\delta}\left(\int_{M}\frac{|u|^{\omega^{*}}}{\d^{\beta}}\,dx\right)^{\frac{(\delta+2)-2}{\omega^{*}-2}}+(f(\delta))^{g(\delta)}\frac{d}{d\delta}\left(\int_{M}\frac{|u|^{\omega^{*}}}{\d^{\beta}}\,dx\right)^{\frac{(\delta+2)-2}{\omega^{*}-2}}\right]\\&
    \stackrel{\eqref{dfg},\eqref{der2}}=\frac{\int_{M}\frac{|u|^{2}}{\d^{2-\beta}}\,dx}{\omega^{*}-2}\Biggl{[}-\log\left(\int_{M}\frac{|u|^{2}}{\d^{2-\beta}}\,dx\right)\\&
    +(\beta-2)\frac{\int_{M}\frac{|u|^{2}}{\d^{2-\beta}}\log(\d)\,dx}{\int_{M}\frac{|u|^{2}}{\d^{2-\beta}}\,dx}+\log\left(\int_{M}\frac{|u|^{\omega^{*}}}{\d^{\beta}}\,dx\right)\Biggl{]}\\&
    =\frac{\int_{M}\frac{|u|^{2}}{\d^{2-\beta}}\,dx}{\omega^{*}-2}\Biggl{[}(\beta-2)\frac{\int_{M}\frac{|u|^{2}}{\d^{2-\beta}}\log(\d)\,dx}{\int_{M}\frac{|u|^{2}}{\d^{2-\beta}}\,dx}+\log\left(\frac{\int_{M}\frac{|u|^{\omega^{*}}}{\d^{\beta}}\,dx}{\int_{M}\frac{|u|^{2}}{\d^{2-\beta}}\,dx}\right)\Biggl{]}\\&
    =\frac{I_{1}}{\omega^{*}-2}\left((\beta-2)\frac{I_{3}}{I_{1}}+\log\frac{I_{2}}{I_{1}}\right)\,,
\end{split}
\end{equation}
where we have set 
\[
    I_{1}=\int_{M}\frac{|u|^{2}}{\d^{2-\beta}}\,dx,
\]
\[
    I_{2}=\int_{M}\frac{|u|^{\omega^{*}}}{\d^{\beta}}\,dx,
\]
and 
\[
    I_{3}=\int_{M}\frac{|u|^{2}}{\d^{2-\beta}}\log(\d)\,dx.
\]
Now, in view of expressing the quantity 
\[
\frac{I_{1}}{\omega^{*}-2}\left((\beta-2)\frac{I_{3}}{I_{1}}+\log\frac{I_{2}}{I_{1}}\right)
\]
in a way that is more convenient for our computations, we have 
\begin{eqnarray}\label{I123}
    \frac{I_{1}}{\omega^{*}-2}\left((\beta-2)\frac{I_{3}}{I_{1}}+\log\frac{I_{2}}{I_{1}}\right) & = & \frac{\beta-2}{\omega^{*}-2}I_{3}+\frac{I_{1}}{\omega^{*}-2}\frac{2\omega^{*}}{2\omega^{*} }\log\frac{I_{2}}{I_{1}}\nonumber\\
    & = & \frac{\beta-2}{\omega^{*}-2}I_{3}+\frac{I_{1}\omega^{*}}{(\omega^{*}-2)2}\log\frac{I^{\frac{2}{\omega^{*} }}_{2}}{I_{1}^{1-1+\frac{2}{\omega^{*}}}}\nonumber\\
    & = & \frac{\beta-2}{\omega^{*}-2}I_{3}+\frac{I_{1}\omega^{*}}{(\omega^{*}-2)2}\log\frac{I^{\frac{2}{\omega^{*} }}_{2}}{I_{1}}-\frac{I_{1}\omega^{*} }{(\omega^{*}-2)2}\log I_{1}^{-1+\frac{2}{\omega^{*}}}\nonumber\\
     & = & \frac{\beta-2}{\omega^{*}-2}I_{3}+\frac{I_{1}\omega^{*}}{(\omega^{*}-2)2}\log\frac{I^{\frac{2}{\omega^{*} }}_{2}}{I_{1}}+\frac{I_{1}}{2}\log I_{1}\,.
\end{eqnarray}
A combination of \eqref{I} together with \eqref{I123} gives 
\begin{equation}\label{II123}
  \lim_{\delta\rightarrow 0}\frac{1}{\delta} I(\delta)=\frac{\beta-2}{\omega^{*}-2}I_{3}+\frac{I_{1}\omega^{*}}{(\omega^{*}-2)2}\log\frac{I^{\frac{2}{\omega^{*} }}_{2}}{I_{1}}+\frac{I_{1}}{2}\log I_{1}\,. 
\end{equation}

Plugging \eqref{II123} and \eqref{delH1} into \eqref{difference} yields

\begin{equation}\label{2last.est}
    \begin{split}
       & \int_{M} \frac{|u|^{2}\log(|u|^{2}\d^{Q-2})}{\d^{2-\beta}}\,dx\\&\leq  \frac{2(\beta-2)}{\omega^{*}-2}I_{3}+\frac{I_{1}\omega^{*}}{(\omega^{*}-2)}\log\frac{I^{\frac{2}{\omega^{*} }}_{2}}{I_{1}}+I_{1}\log I_{1}\\&
        = \int_{M}\frac{|u|^{2}}{\d^{2-\beta}}\log\left(\d^{\frac{(\beta-2)(Q-2)}{2-\beta}}\int_{M}\frac{|u|^{2}}{\d^{2-\beta}}\,dx\right)dx\\&
        + \frac{(Q-\beta)\int_{M}\frac{|u|^{2}}{\d^{2-\beta}}\,dx}{2-\beta}\log\left[\frac{\left(\int_{M}\frac{|u|^{\omega^{*}}}{\d^{\beta}}\,dx\right)^{\frac{2}{\omega^{*}}}}{\int_{M}\frac{|u|^{2}}{\d^{2-\beta}}\,dx}\right]\,,
    \end{split}
\end{equation}
where we have substituted the expressions for $I_1, I_2$ and $I_3$ and have used the fact that 
\[
\frac{\omega^{*}}{\omega^{*}-2}=\frac{Q-\beta}{2-\beta}\quad \text{and}\quad \frac{2(\beta-2)}{\omega^{*}-2}=\frac{(\beta-2)(Q-2)}{Q-\beta}\,.
\] 
Now, rearranging the terms of \eqref{2last.est} using the properties of the logarithm we get 
\begin{equation}\label{last.est}
\begin{split}
    &\int_{M}\frac{\frac{|u|^{2}}{\d^{2-\beta}}}{\int_{M}\frac{|u|^{2}}{\d^{2-\beta}}\,dx}\log\left(\frac{|u|^{2}\d^{2Q-4}}{\int_{M}\frac{|u|^{2}}{\d^{2-\beta}}\,dx}\right)\,dx \\ &\leq\frac{Q-\beta}{2-\beta}\log\left[\frac{\left(\int_{M}\frac{|u|^{\omega^{*}}}{\d^{\beta}}\,dx\right)^{\frac{2}{\omega^{*}}}}{\int_{M}\frac{|u|^{2}}{\d^{2-\beta}}\,dx}\right]\,.
\end{split}
\end{equation}
Now, by Theorem \ref{thm.hardy.sM},  since $\omega^{*}=\frac{2(Q-\beta)}{Q-2}$, we have
\begin{eqnarray}
    \label{appl.thm.hardy.s}
    \left(\int_{M}\frac{|u|^{\omega^{*}}}{\d^{\beta}}\,dx\right)^{\frac{2}{\omega^{*}}}& \leq & A_{2}^{\frac{Q(2-\beta)}{Q-2} \frac{2}{\omega^{*}}} C_{H}^{-\frac{\beta}{\omega^{*}}} \left( \int_{M}|\nabla_{H}u|^{2}\,dx\right)^{\frac{2}{\omega^{*}}\left(\frac{Q-\beta}{Q-2} \right)}\,.
\end{eqnarray}
Now, since 
   \[
   \frac{2}{\omega^{*}}\left(\frac{Q-\beta}{Q-2} \right)=2 \frac{Q-2}{2(Q-\beta)}\frac{Q-\beta}{Q-2}=1\,,
   \] 
   and 
   \[
   \frac{Q(2-\beta)}{Q-2} \frac{2}{\omega^{*}}=\frac{Q(2-\beta)}{Q-2} 2 \frac{Q-2}{2(Q-\beta)}=\frac{2Q(2-\beta)}{Q-\beta}\,,
   \]
   inequality \eqref{appl.thm.hardy.s} can be written as 
   \begin{equation}
       \label{appl.hardy.2}
        \left(\int_{M}\frac{|u|^{\omega^{*}}}{\d^{\beta}}\,dx\right)^{\frac{2}{\omega^{*}}} \leq C_{LH} \int_{M}|\nabla_{H}u|^{2}\,dx\,,
   \end{equation}
   where 
   \[
   C_{LH}:=A_{2}^{\frac{2Q(2-\beta)}{Q-\beta}}C_{H}^{-\frac{\beta(Q-2)}{Q-\beta}}\,.
   \]
   Finally, a combination of \eqref{appl.hardy.2} and \eqref{last.est} gives 
   \begin{equation*}
\begin{split}
    &\int_{M}\frac{\frac{|u|^{2}}{\d^{2-\beta}}}{\int_{M}\frac{|u|^{2}}{\d^{2-\beta}}\,dx}\log\left(\frac{|u|^{2}\d^{2Q-4}}{\int_{M}\frac{|u|^{2}}{\d^{2-\beta}}\,dx}\right)\,dx \\&
    \leq\frac{Q-\beta}{2-\beta}\log\left(\frac{C_{LH}\int_{M}|\nabla_{H}u|^2\,dx}{\int_{M}\frac{|u|^{2}}{\d^{2-\beta}}\,dx}\right)\,,
\end{split}
\end{equation*}
and we have completed the proof.
\end{proof}
\begin{rem}
    In view of Remark \ref{rem.Hn}, the constant $C_{LH}$ in the logarithmic Hardy inequality as in \eqref{log.hardyM} in the case of the Heisenberg group $\mathbb{H}^n$ is explicit and depends only on $\beta$. In particular, using \eqref{appl.thm.hardy.s} for $A_2=\frac{(n!)^{\frac{1}{n+1}}}{\pi n^2}$, we can calculate
    \[
    C_{LH}=\left(\frac{(n!)^{\frac{1}{n+1}}}{\pi n^2} \right)^{\frac{2Q(2-\beta)}{Q-\beta}}C_{H}^{-\frac{\beta(Q-2)}{Q-\beta}}\,,
    \]
    where $Q=2n+2$. Similarly in the case of $\R^n$, choosing $A_2=(\pi n^2-2\pi n)^{-1/2}\frac{\Gamma(n)}{\Gamma(n/2)}$ we have
    \[
     C_{LH}=\left((\pi n^2-2\pi n)^{-1/2}\frac{\Gamma(n)}{\Gamma(n/2)}\right)^{\frac{2n(2-\beta)}{n-\beta}}C_{H}^{-\frac{\beta(n-2)}{n-\beta}}\,.
    \] 
\end{rem}
Choosing $\beta=0$ in \eqref{log.hardyM} we get $C_{LH}=A_{2}^{4}$, and Theorem \ref{thm.logHardyM} implies the following:
\begin{cor}
Under the hypothesis of Theorem \ref{thm.logHardyM}, for all $u \in C_{0}^{\infty}(M)$, we have
\[
\int_{M}\frac{\frac{|u|^{2}}{\d^{2}}}{\left\|\frac{u}{w}\right\|_{L^2(M)}^2}\log\left(\frac{|u|^{2}\d^{2Q-4}}{\left\|\frac{u}{w}\right\|_{L^2(M)}^2}\right)\,dx \leq\frac{Q}{2}\log\left(\frac{A_{2}^{4}\int_{M}|\nabla_{H}u|^2\,dx}{\left\|\frac{u}{w}\right\|_{L^2(M)}^2}\right)\,.
\]  
\end{cor}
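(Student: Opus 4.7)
The plan is to apply Theorem \ref{thm.logHardyM} directly at $\beta = 0$, which lies in the admissible range $0 \le \beta < 2$, and to check that each expression in \eqref{log.hardyM} collapses to the form stated in the corollary.

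The bookkeeping proceeds as follows. The normalization exponent $(2-\beta)/2$ becomes $1$, so $w^{(2-\beta)/2} = w$ and the norm $\|u/w^{(2-\beta)/2}\|_{L^2(M)}$ appearing in \eqref{log.hardyM} reduces to $\|u/w\|_{L^2(M)}$. The power of the weight on the left-hand side satisfies $\d^{2-\beta} = \d^{2}$, while the factor $\d^{2Q-4}$ inside the logarithm does not depend on $\beta$ at all. The prefactor $(Q-\beta)/(2-\beta)$ on the right-hand side simplifies to $Q/2$.

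It then remains to evaluate the constant $C_{LH}$ from \eqref{CQbM} at $\beta = 0$:
\begin{equation*}
C_{LH} = A_{2}^{\frac{2Q(2-0)}{Q-0}} \cdot C_{H}^{-\frac{0 \cdot (Q-2)}{Q-0}} = A_{2}^{4}\cdot 1 = A_{2}^{4}.
\end{equation*}
In particular the Hardy constant $C_{H}$ disappears entirely from the final estimate, because its exponent vanishes. Substituting all of these simplifications into \eqref{log.hardyM} produces precisely the inequality claimed in the corollary.

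There is essentially no obstacle to overcome here, since the statement is a direct specialization of Theorem \ref{thm.logHardyM}. The only conceptual point worth flagging is that the hypothesis of that theorem, namely the validity of the weighted Hardy inequality \eqref{hardyM} on $M$ with some constant $C_{H}>0$, remains indispensable since it is used in the proof of Theorem \ref{thm.logHardyM}; the constant $C_{H}$ merely drops out of the final bound through the vanishing of its exponent at $\beta=0$.
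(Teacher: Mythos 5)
Your proposal is correct and follows exactly the paper's own route: the paper obtains this corollary by setting $\beta=0$ in Theorem \ref{thm.logHardyM}, noting that $C_{LH}$ reduces to $A_{2}^{4}$ and the prefactor $\frac{Q-\beta}{2-\beta}$ to $\frac{Q}{2}$, just as you do. Your additional remark that the hypothesis \eqref{hardyM} remains indispensable even though $C_{H}$ drops out of the final constant is accurate and worth keeping in mind.
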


The following inequality that we call ``geometric Hardy-Poincar\'e inequality'' follows by the geometric logarithmic Hardy inequality \eqref{log.hardyM} and can be viewed as a generalisation on stratified groups of the generalised Poincar\'e inequality \cite{Bec89} with weights. 
\begin{thm}[Geometric Hardy-Poincar\'e inequality]\label{thm.poi.M}
    Let $\G$ be a stratified group of homogeneous dimension $Q \geq 3$, and let $M \subset \G$ be an open subset of $\G$. Assume also that \eqref{hardyM} holds true for some measurable non-negative function $w$ on $M$. Then, for $0\leq \beta < 2$, $q>1$, and for all $u \in C_{0}^{\infty}(M)$ we have
   \begin{equation}\label{p.hardy.M}
\begin{split}
     &\int_{M} \frac{\frac{|u|^2}{\d^{2-\beta}}}{\left\|\frac{u}{w^{\frac{2-\beta}{2}}}\right\|_{L^2(M)}^2}\,dx-\left(\int_{M}\frac{\frac{|u|^{\frac{2}{q}}}{\d^{\frac{2-\beta}{q}}}}{\left\|\frac{u}{w^{\frac{2-\beta}{2}}}\right\|_{L^2(M)}^{\frac{2}{q}}}\,dx \right)^{q} \\&
      \leq  \frac{(q-1)(Q-\beta)}{2-\beta} \log\left(\frac{C_{LH}\int_{M}|\nabla_{H}u|^2\,dx}{\left\|\frac{u}{w
^{\frac{2-\beta}{2}}}\right\|_{L^2(M)}^2}\right)+J(u)\,,
      \end{split}
\end{equation}

where $C_{LH}$ is as in \eqref{CQbM} and  we have defined $J(u)$ by 
\[
J(u):= (q-1) \int_{M}\frac{\frac{|u|^2}{\d^{2-\beta}}}{\left\|\frac{u}{w^{\frac{2-\beta}{2}}}\right\|^{2}_{L^2(M)}}\log \left(\frac{|u|^2 \d^{(\beta-2)-(2Q-4)}}{\left\|\frac{u}{w^{\frac{2-\beta}{2}}}\right\|^{2}_{L^2(M)}}\right)\,dx\,.
\]  
% In particular, when $\left\|\frac{|u|}{\d^{\frac{2-\beta}{2}}}\right\|_{L^2(M)}=1$, then we have
%\begin{equation*}
 %   \begin{split}
  %      & \int_{M}\frac{|u|^2}{\d^{2-\beta}}\,dx-\left(\int_{M}\frac{|u|^{\frac{2}{q}}}{\d^{\frac{2-\beta}{q}}} \right)^q\\&
   %     \leq \frac{(q-1)(Q-\beta)}{2-\beta} \log (C_{LH}\int_{M}|\nabla_{H}u|^2\,dx)\,.
    %\end{split}
%\end{equation*}
\end{thm}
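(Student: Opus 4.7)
My plan is to reduce \eqref{p.hardy.M} to the logarithmic Hardy inequality \eqref{log.hardyM} via an elementary pointwise bound combined with Jensen's inequality for a natural probability measure built from $u$. First I would streamline notation by setting $N := \|u/w^{(2-\beta)/2}\|_{L^{2}(M)}^{2}$ and $\tilde h := |u|^{2}/(N\d^{2-\beta})$, so that $\tilde h\,dx$ is a probability measure on $M$ and a short computation rewrites the left-hand side of \eqref{p.hardy.M} as $1-A$ with
\[
A \,:=\, \left(\int_{M}\tilde h^{1/q}\,dx\right)^{q} \,=\, \frac{1}{N}\left(\int_{M}|u|^{2/q}/\d^{(2-\beta)/q}\,dx\right)^{q}.
\]

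Next I would bound $1-A$ by combining the pointwise inequality $1-x\leq -\log x$ (valid for $x>0$) at $x=A$ with Jensen's inequality for the concave logarithm and the probability measure $\tilde h\,dx$, using the reformulation $\int_{M}\tilde h^{1/q}\,dx = \int_{M}\tilde h^{1/q-1}\,(\tilde h\,dx)$. This yields the key entropy-type bound
\[
1-A \,\leq\, -\log A \,=\, -q\log\!\int_{M}\tilde h^{1/q}\,dx \,\leq\, (q-1)\int_{M}\tilde h\log\tilde h\,dx.
\]

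To translate this into \eqref{p.hardy.M}, I would then use the algebraic identity
\[
\log\tilde h \,=\, \log\frac{|u|^{2}\d^{2Q-4}}{N} \,+\, (\beta+2-2Q)\log\d,
\]
chosen precisely to expose the integrand of \eqref{log.hardyM}. Applying \eqref{log.hardyM} to the first piece contributes exactly $\tfrac{(q-1)(Q-\beta)}{2-\beta}\log\!\bigl(C_{LH}\|\nabla_{H}u\|_{L^{2}(M)}^{2}/N\bigr)$. Expanding $J(u)$ via the analogous decomposition $\log(|u|^{2}\d^{(\beta-2)-(2Q-4)}/N) = \log(|u|^{2}/N) + (\beta+2-2Q)\log\d$, one sees that the $\log\d$-contributions coming from the entropy decomposition align with the $\log\d$-part of $J(u)$; the unusual-looking exponent $(\beta-2)-(2Q-4)$ in $J(u)$ is engineered exactly to make this matching close up.

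The hard part will be the careful bookkeeping in this last step: tracking the powers of $\d$ and $|u|$ across the entropy decomposition, the application of the log-Hardy bound, and the expansion of $J(u)$, and then verifying that all $\log\d$-terms combine to recover exactly the right-hand side of \eqref{p.hardy.M}. Once the coefficients align, reassembling the chain of inequalities above produces \eqref{p.hardy.M}.
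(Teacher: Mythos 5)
Your route to the key entropy bound is correct but genuinely different from the paper's. The paper fixes $g$ with $\|g\|_{L^2}=1$, shows that $b(q)=q\log\int_M|g|^{2/q}\,dx$ is convex via Cauchy--Schwarz, deduces that the difference quotient $\phi(q)=\frac{e^{b(1)}-e^{b(q)}}{1-q}$ is non-increasing, and evaluates $\lim_{q\to1}\phi(q)$ by l'H\^opital to arrive at $\int_M|g|^2\,dx-\bigl(\int_M|g|^{2/q}\,dx\bigr)^q\le 2(q-1)\int_M|g|^2\log|g|\,dx$, which is exactly your $1-A\le(q-1)\int_M\tilde h\log\tilde h\,dx$. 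Your derivation via $1-A\le-\log A$ followed by Jensen applied to $\int_M\tilde h^{1/q-1}\,(\tilde h\,dx)$ is shorter, avoids all differentiation in $q$, and is perfectly valid; this is a genuine simplification of that part of the argument.

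The gap is in the final ``bookkeeping'' step, which you defer and which in fact does \emph{not} close as you hope. From $\log\tilde h=\log\frac{|u|^2w^{2Q-4}}{N}+(\beta+2-2Q)\log w$ and \eqref{log.hardyM}, your method yields the remainder
\begin{equation*}
(q-1)\int_M\tilde h\,\log\bigl(w^{(\beta-2)-(2Q-4)}\bigr)\,dx\,,
\end{equation*}
whereas the stated $J(u)$ equals this quantity \emph{plus} $(q-1)\int_M\tilde h\log\frac{|u|^2}{N}\,dx$, a term with no definite sign. So your argument does not literally produce \eqref{p.hardy.M} with the stated $J(u)$; it produces the inequality with the smaller-looking remainder above, and the two are not comparable in general. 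You should be aware that the paper's own proof hides exactly the same discrepancy: it asserts the ``equality'' $\log\frac{|u|^2w^{\beta-2}}{N}=\log\frac{|u|^2w^{2Q-4}}{N}+\log\frac{|u|^2w^{(\beta-2)-(2Q-4)}}{N}$, which double-counts $\log\frac{|u|^2}{N}$ and is false unless $\int_M\tilde h\log\frac{|u|^2}{N}\,dx=0$. Your decomposition is the correct one; to finish, either state the theorem with the remainder $(q-1)\int_M\tilde h\log\bigl(w^{(\beta-2)-(2Q-4)}\bigr)\,dx$ (which is also what Remark \ref{rem.J(u)} implicitly assumes when it claims $J(u)\le0$ for $w\ge1$), or explicitly add and subtract the $\log\frac{|u|^2}{N}$ term and carry it into the error term rather than claiming the contributions ``align''.
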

\begin{proof}
     For $g$ such that $\|g\|_{L^2(\G)}=1$ we consider the function $b=b(q):=q \log \left( \int_{M}|g|^{\frac{2}{q}}\,dx \right)$ for $q> 1$. If we set
   \[
   I(q):=\int_{M} |g|^{\frac{2}{q}}\,dx\,,
   \]
   then we have
   \begin{equation*}
       b(q)=q\log I(q).
   \end{equation*}
   Now, since
   \begin{equation*}
       I'(q)=\int_{M}\log |g||g|^{\frac{2}{q}}\left(-\frac{2}{q^2}\right)\,dx,
   \end{equation*}
   
    \begin{equation*}
       I''(q)=\int_{M}\left((\log|g|)^{2}|g|^{\frac{2}{q}}\left(-\frac{2}{q^2}\right)^{2}+\log |g||g|^{\frac{2}{q}}\left(\frac{4}{q^3}\right)\right)\,dx\,,
   \end{equation*}
  and
   \[
   b'(q)=\log I(q)+\frac{q I'(q)}{I(q)}\,,
   \]
   we get
   \begin{equation}
   \begin{split}
       b''(q)&=\frac{I(q)(2I'(q)+qI''(q))-q(I'(q))^{2}}{I^{2}(q)}\\&
       =\frac{q}{I^{2}(q)}\Biggl{[}\left(\int_{M}|g|^{\frac{2}{q}}d\mu\right)\left(\int_{M}(\log|g|)^{2}|g|^{\frac{2}{q}}\left(-\frac{2}{q^2}\right)^{2}dx\right)\\&
       -\left(\int_{M}\log |g||g|^{\frac{2}{q}}\left(-\frac{2}{q^2}\right)\,dx\right)^{2}\Biggl{]}.
   \end{split}
   \end{equation}
   Observe  that $b$ is a convex function, i.e., we have $b'' \geq 0$, since by the Cauchy-Schwarz inequality we have 
   \[
   \left( \int_{M}|g|^{\frac{2}{q}}\,dx\right)\left(\int_{M}(\log |g|)^2 |g|^{\frac{2}{q}}\left(-\frac{2}{q^2}\right)^2\,dx \right) \geq \left(\int_{M}\log |g||g|^{\frac{2}{q}}\left(-\frac{2}{q^2}\right)\,dx \right)^2\,.
   \]
 Now, the convexity of $b$ implies  the convexity of the function $q \mapsto e^{b(q)}$, and the latter implies that the function 
   \[
   \phi(q):= \frac{e^{b(1)}-e^{b(q)}}{1-q}\,,
   \]
   is monotonically non-increasing for $q \geq 1$ \footnote{Recall that a function $f$ on $\mathbb{R}$ is convex if and only if the function $Q(x,y)=\frac{f(y)-f(x)}{y-x}$, $x,y \in \mathbb{R}$, is monotonically non-decreasing in $x$ for fixed $y$.}. Consequently, we get 
   \begin{equation}\label{phi.decreasing}
   \phi(q)\leq \lim_{q \rightarrow 1}\phi(q)\,,
   \end{equation}
  where
   \begin{equation}\label{phi(q)}
   \phi(q)=\frac{\int_{M}|g|^2\,dx-\left( \int_{M}|g|^{\frac{2}{q}}\,dx\right)^q}{1-q}\,.
   \end{equation}
   For the computation of the limit $\lim\limits_{q \rightarrow 1}\phi(q)$ we will use the l'H\^{o}pital rule.
   We have 
   \begin{eqnarray*}
       \frac{d}{dq}\left(\int_{M}|g|^{\frac{2}{q}}\,dx \right)^q & = &
       \left(\int_{M}|g|^{\frac{2}{q}}\,dx \right)^q \frac{d}{dq} \left[q \log \left(\int_{M}|g|^{2/q}\,dx \right) \right] \\
      & = &  \left(\int_{M}|g|^{\frac{2}{q}}\,dx \right)^q \left[\log \left(\int_{M}|g|^{2/q}\,dx \right)-\frac{2}{q}\frac{\int_{M}\log |g||g|^{2/q}dx}{\int_{M}|g|^{2/q}dx}  \right],
   \end{eqnarray*}
   which implies 
   \begin{eqnarray*}
     \lim_{q \rightarrow 1}\frac{d}{dq}\left(\int_{M}|g|^{\frac{2}{q}}\,dx \right)^q & = & \int_{M}|g|^2dx \cdot \log \left(\int_{M}|g|^2dx \right)-2\int_{M}\ |g|^2 \log|g|\,dx\\
     & = & -2\int_{M}\ |g|^2 \log|g|\,dx\,,
   \end{eqnarray*}
   since $\|g\|_{L^2(\G)}=1$. So, we have
    $\lim\limits_{q \rightarrow 1}\phi(q)=2 \int_{M}\ |g|^2 \log|g|\,dx$. Combining this together with \eqref{phi.decreasing} and \eqref{phi(q)} we get 
   \begin{equation*}
       \int_{M}|g|^2\,dx-\left( \int_{M}|g|^{\frac{2}{q}}\,dx\right)^q \leq 2(q-1) \int_{M}|g|^2 \log |g|\,dx\,.
   \end{equation*}
   If we replace $g(x)$ by $\frac{u(x)}{\d^{\frac{2-\beta}{2}}}\left\|\frac{u}{w^{\frac{2-\beta}{2}}}\right\|_{L^2(M)}^{-1}$, the latter inequality reads as follows
   \begin{equation}\label{diff}
       \begin{split}
           &\int_{M}\frac{\frac{|u|^2}{\d^{2-\beta}}}{\left\|\frac{u}{w^{\frac{2-\beta}{2}}}\right\|_{L^2(M)}^2}\,dx-\left(\int_{M}\frac{\frac{|u|^{\frac{2}{q}}}{\d^{\frac{2-\beta}{q}}}}{\left\|\frac{u}{w^{\frac{2-\beta}{2}}}\right\|_{L^2(M)}^{\frac{2}{q}}}\,dx \right)^{q} \\&
           \leq(q-1) \int_{M}\frac{\frac{|u|^2}{\d^{2-\beta}}}{\left\|\frac{u}{w^{\frac{2-\beta}{2}}}\right\|_{L^2(M)}^2}\log \left(\frac{|u|^2\d^{\beta-2}}{\left\|\frac{u}{w^{\frac{2-\beta}{2}}}\right\|_{L^2(M)}^2}\right)\,dx \\&
           = (q-1) \int_{M}\frac{\frac{|u|^2}{\d^{2-\beta}}}{\left\|\frac{u}{w^{\frac{2-\beta}{2}}}\right\|^{2}_{L^2(M)}}\log \left(\frac{|u|^2 \d^{2Q-4}}{\left\|\frac{u}{w^{\frac{2-\beta}{2}}}\right\|^{2}_{L^2(M)}}\right)\,dx\\&
           +(q-1) \int_{M}\frac{\frac{|u|^2}{\d^{2-\beta}}}{\left\|\frac{u}{w^{\frac{2-\beta}{2}}}\right\|^{2}_{L^2(M)}}\log \left(\frac{|u|^2 \d^{(\beta-2)-\left(2Q-4 \right)}}{\left\|\frac{u}{w^{\frac{2-\beta}{2}}}\right\|^{2}_{L^2(M)}}\right)\,dx
       \end{split}
   \end{equation}
  where for the last equality we have used the properties of the logarithm and the equality 
  \[
  \beta-2=2Q-4+\left[\beta-2-\left(2Q-4\right) \right]\,.
  \]
Hence a combination of \eqref{diff} together with the the logarithmic Hardy inequality \eqref{log.hardyM} gives
\begin{equation}\label{poin.hardy.I}
\begin{split}
      &\int_{M}\frac{\frac{|u|^2}{\d^{2-\beta}}}{\left\|\frac{u}{w^{\frac{2-\beta}{2}}}\right\|_{L^2(M)}^2}\,dx-\left(\int_{M}\frac{\frac{|u|^{\frac{2}{q}}}{\d^{\frac{2-\beta}{q}}}}{\left\|\frac{u}{w^{\frac{2-\beta}{2}}}\right\|_{L^2(M)}^{\frac{2}{q}}}\,dx \right)^{q} \\&
      \leq  \frac{(q-1)(Q-\beta)}{2-\beta} \log\left(\frac{C_{LH}\int_{M}|\nabla_{H}u|^2\,dx}{\left\|\frac{u}{w^{\frac{2-\beta}{2}}}\right\|_{L^2(M)}}\right)+J(u)\,,
      \end{split}
\end{equation}
where we have defined $J(u)$ by 
\[
J(u):= (q-1) \int_{M}\frac{\frac{|u|^2}{\d^{2-\beta}}}{\left\|\frac{u}{w^{\frac{2-\beta}{2}}}\right\|^{2}_{L^2(M)}}\log \left(\frac{|u|^2 \d^{(\beta-2)-(2Q-4)}}{\left\|\frac{u}{w^{\frac{2-\beta}{2}}}\right\|^{2}_{L^2(M)}}\right)\,dx\,,
\] 
and the proof is complete.
\end{proof}
\begin{rem}\label{rem.J(u)}
Notice that $J(u)$ becomes non-positive whenever for a.e. $x \in \G$ we have
    \[
   \left[ (\beta-2)-(2Q-4)\right]\log(w(x))\leq 0\,.
    \]
    Since for $0\leq \beta <2$ we have $(\beta-2)-(2Q-4)<0$, we conclude that for $w \geq 1$ such that $\log w \geq 0$ for a.e $x \in M$, we get that $J(u)\leq 0$,    
\end{rem}

\section{applications of the abstract results using known geometric hardy inequalities}\label{Sec4}

In this section, we employ the unified method of deriving logarithmic Hardy and Hardy-Poincar\'e inequalities that was developed in Section \ref{Sec3}.
\begin{cor}[Logarithmic geometric Hardy inequality on $\G^{+}$ ]\label{cor.Gplus}
    Let $\G^{+}$ be the half-space of a stratified group $\G$ of homogeneous dimension $Q\geq  3$. For $0\leq \beta < 2$, and for all $u \in C_{0}^{\infty}(\G^{+})$ we have    
     \begin{equation}\label{log.hardy}
\begin{split}
    &\int_{\G^{+}}\frac{\frac{|u|^{2}}{\dd^{2-\beta}}}{\left\|\frac{u}{\dx^{\frac{2-\beta}{2}}}\right\|^{2}_{L^2(\G^{+})}}\log\left(\frac{|u|^{2}\dd^{2Q-4}}{\left\|\frac{u}{\dx^{\frac{2-\beta}{2}}}\right\|^{2}_{L^2(\G^{+})}}\right)\,dx \\ &\leq\frac{Q-\beta}{2-\beta}\log\left(\frac{C_{LH,1}\int_{\G^{+}}|\nabla_{H}u|^2\,dx}{\left\|\frac{u}{\dx^{\frac{2-\beta}{2}}}\right\|^{2}_{L^2(\G^{+})}}\right)\,,
\end{split}
\end{equation}
where  
 \begin{equation}\label{CQb}
   C_{LH,1}:=A_{2}^{\frac{2Q(2-\beta)}{Q-\beta}}2^{\frac{2\beta(Q-2)}{Q-\beta}}\,.
   \end{equation}
 In particular, when $\left\|\frac{u}{\dx^{\frac{2-\beta}{2}}}\right\|_{L^2(\G^{+})}=1$, we have 
 
\begin{equation}\label{L1=1}
\begin{split}
    &  \int_{\G^{+}}\frac{|u|^{2}}{\dd^{2-\beta}}\log \left(|u| \dd^{Q-2} \right)\,dx \\& 
    \leq \frac{Q-\beta}{2(2-\beta)} \log \left(C_{LH,1}\|\nabla_{H}u\|^{2}_{L^2(\G^{+})}\right)\, 
\end{split}        
\end{equation}
\end{cor}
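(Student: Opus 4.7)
The plan is to apply Theorem \ref{thm.logHardyM} directly with the specialization $M = \G^{+}$ and weight $w(x) = \dx$. The first step is to verify the hypothesis: the geometric Hardy inequality \eqref{hardyM} on the half-space with this weight is exactly the content of Theorem \ref{thm.lh.rs1}, which supplies the constant $C_H = 1/4$. Since $\dx$ is non-negative and measurable on $\G^{+}$, the abstract setup is admissible and Theorem \ref{thm.logHardyM} applies verbatim.

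The second step is to translate the abstract constant \eqref{CQbM} into the explicit form \eqref{CQb}. Substituting $C_H = 1/4$ into the defining formula for $C_{LH}$ yields
\begin{equation*}
C_{LH} \;=\; A_2^{\frac{2Q(2-\beta)}{Q-\beta}} \left(\tfrac{1}{4}\right)^{-\frac{\beta(Q-2)}{Q-\beta}} \;=\; A_2^{\frac{2Q(2-\beta)}{Q-\beta}}\, 2^{\frac{2\beta(Q-2)}{Q-\beta}},
\end{equation*}
which is precisely $C_{LH,1}$. Inserting $(M,w) = (\G^{+}, \dx)$ together with this identification of the constant into \eqref{log.hardyM} gives \eqref{log.hardy}.

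The third step is to derive \eqref{L1=1} from \eqref{log.hardy} under the normalization $\left\|u/\dx^{(2-\beta)/2}\right\|_{L^{2}(\G^{+})} = 1$. With this choice the denominators inside both logarithms in \eqref{log.hardy} disappear, and the elementary identity $\log(|u|^{2}\dd^{2Q-4}) = 2\log(|u|\dd^{Q-2})$ lets one pull a factor of $2$ out of the left-hand side; dividing through by $2$ converts the outer constant $\tfrac{Q-\beta}{2-\beta}$ into $\tfrac{Q-\beta}{2(2-\beta)}$, matching \eqref{L1=1}.

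Since the corollary is a purely mechanical specialization of an already-proved abstract theorem, no genuine analytic obstacle is expected; the only things to track are the arithmetic of the exponents in $C_{LH,1}$ and the bookkeeping in the normalized case. The substantive content — the Hardy--Sobolev interpolation of Theorem \ref{thm.hardy.sM}, the L'H\^{o}pital limit, and the logarithmic rearrangement — is already absorbed inside Theorem \ref{thm.logHardyM}.
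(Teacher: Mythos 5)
Your proposal is correct and follows exactly the paper's route: the corollary is obtained by specializing Theorem \ref{thm.logHardyM} to $M=\G^{+}$ and $w(x)=\dd$, with the Hardy hypothesis supplied by Theorem \ref{thm.lh.rs1} (so $C_H=1/4$), and your computation $\left(\tfrac{1}{4}\right)^{-\beta(Q-2)/(Q-\beta)}=2^{2\beta(Q-2)/(Q-\beta)}$ correctly recovers $C_{LH,1}$. The derivation of the normalized case \eqref{L1=1} by pulling the factor $2$ out of the logarithm is likewise the intended (and only) step, so nothing is missing.
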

\begin{proof}
    The proof is an immediate consequence of Theorems \ref{thm.lh.rs1} and \ref{thm.logHardyM} if one chooses $w(x)=\dd$ and $M=\G^{+}$.
\end{proof}
\begin{cor}[Geometric Poincar\'e-Hardy inequality on $\G^{+}$. I]\label{cor12}
Let $\G^{+}$ be the half space of a stratified group $\G$ of homogeneous dimension $Q\geq  3$. For $0\leq \beta < 2$, $q>1$, and for all $u \in C_{0}^{\infty}(\G^{+})$ we have
    \begin{equation}\label{poin.hardy.G}
   \begin{split}
      &\int_{\G^{+}}\frac{\frac{|u|^2}{\dd^{2-\beta}}}{\left\|\frac{u}{\dx^{\frac{2-\beta}{2}}}\right\|_{L^2(\G^{+})}^2}\,dx-\left(\int_{\G^{+}}\frac{\frac{|u|^{\frac{2}{q}}}{\dd^{\frac{2-\beta}{q}}}}{\left\|\frac{u}{\dx^{\frac{2-\beta}{2}}}\right\|_{L^2(\G^{+})}^{\frac{2}{q}}}\,dx \right)^{q} \\&
      \leq  \frac{(q-1)(Q-\beta)}{2-\beta} \log\left(\frac{C_{LH,1}\int_{\G^{+}}|\nabla_{H}u|^2\,dx}{\left\|\frac{u}{\dx^{\frac{2-\beta}{2}}}\right\|_{L^2(\G^{+})}^2}\right)+J(u)\,,
      \end{split}
\end{equation}
where $C_{LH,1}$ is as in \eqref{CQb}, and we have defined $J(u)$ by 
\[
J(u):= (q-1) \int_{\G^{+}}\frac{\frac{|u|^2}{\dd^{2-\beta}}}{\left\|\frac{u}{\dx^{\frac{2-\beta}{2}}}\right\|^{2}_{L^2(\G^{+})}}\log \left(\frac{|u|^2 \dd^{(\beta-2)-(2Q-4)}}{\left\|\frac{u}{\dx^{\frac{2-\beta}{2}}}\right\|^{2}_{L^2(\G^{+})}}\right)\,dx\,.
\]  

\end{cor}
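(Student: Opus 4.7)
The plan is to obtain Corollary \ref{cor12} as a direct specialization of the abstract geometric Hardy--Poincar\'e inequality \eqref{p.hardy.M} in Theorem \ref{thm.poi.M}. First, I would verify that the hypothesis \eqref{hardyM} of Theorem \ref{thm.poi.M} is met on $M = \G^{+}$ with the weight $w(x) = \dd$: this is exactly the content of Theorem \ref{thm.lh.rs1}, which gives \eqref{hardyM} with constant $C_H = \tfrac{1}{4}$.

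Next, I would instantiate \eqref{p.hardy.M} with these choices of $M$ and $w$. The left-hand side, the $J(u)$ term, and the overall structure of the right-hand side transcribe verbatim into \eqref{poin.hardy.G} once we replace every occurrence of $w(x)$ by $\dd$ and the $L^{2}(M)$-norms by $L^{2}(\G^{+})$-norms. The only non-trivial bookkeeping is the constant: from \eqref{CQbM}, with $C_H = \tfrac{1}{4}$, one computes
\begin{equation*}
C_{LH} \;=\; A_{2}^{\frac{2Q(2-\beta)}{Q-\beta}}\,\Bigl(\tfrac{1}{4}\Bigr)^{-\frac{\beta(Q-2)}{Q-\beta}} \;=\; A_{2}^{\frac{2Q(2-\beta)}{Q-\beta}}\,2^{\frac{2\beta(Q-2)}{Q-\beta}} \;=\; C_{LH,1},
\end{equation*}
which matches \eqref{CQb}, so the resulting inequality coincides with \eqref{poin.hardy.G}.

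There is no real obstacle here: once Theorem \ref{thm.poi.M} is established, this corollary is a one-line substitution, so the argument amounts to (i) invoking Theorem \ref{thm.lh.rs1} to legitimize the use of Theorem \ref{thm.poi.M}, and (ii) verifying the bookkeeping for $C_{LH,1}$. The entire proof can therefore be written in two sentences, essentially identical to the proof of Corollary \ref{cor.Gplus}, which applies the same reduction to Theorem \ref{thm.logHardyM}.
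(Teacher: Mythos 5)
Your proposal is correct and follows exactly the paper's own route: the paper likewise deduces Corollary \ref{cor12} as an immediate consequence of Theorem \ref{thm.poi.M} with $M=\G^{+}$ and $w(x)=\dd$, the Hardy hypothesis being supplied by Theorem \ref{thm.lh.rs1} with $C_{H}=\tfrac14$. Your verification that $C_{LH}$ then reduces to $C_{LH,1}$ as in \eqref{CQb} is accurate and is the only bookkeeping required.
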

\begin{proof}
    The result is an immediate consequence of Theorem \ref{thm.poi.M} if one chooses $w(x)=\dd$ and $M=\G^{+}$.
\end{proof}

\begin{cor}[Geometric Hardy-Poincar\'e inequality on $\G^{+}$. II]\label{cor13}
Let $\G^{+}$ be the half space of a stratified group $\G$ of homogeneous dimension $Q\geq  3$. For $0\leq \beta < 2$, $q>1$, and for all $u \in C_{0}^{\infty}(\G^{+})$ we have
    \begin{equation}\label{poin.hardy.G}
   \begin{split}
      &\int_{\G^{+}}\frac{\frac{|u|^2}{(1+\dd)^{2-\beta}}}{\left\|\frac{u}{(1+\dx)^{\frac{2-\beta}{2}}}\right\|_{L^2(\G^{+})}^2}\,dx-\left(\int_{\G^{+}}\frac{\frac{|u|^{\frac{2}{q}}}{(1+\dd)^{\frac{2-\beta}{q}}}}{\left\|\frac{u}{(1+\dx)^{\frac{2-\beta}{2}}}\right\|_{L^2(\G^{+})^2}^{\frac{2}{q}}}\,dx \right)^{q} \\&
      \leq  \frac{(q-1)(Q-\beta)}{2-\beta} \log\left(\frac{C_{LH,1}\int_{\G^{+}}|\nabla_{H}u|^2\,dx}{\left\|\frac{u}{(1+\dx)^{\frac{2-\beta}{2}}}\right\|_{L^2(\G^{+})}^2}\right)\,,
      \end{split}
\end{equation}
where $C_{LH,1}$ is as in \eqref{CQb}.
    
\end{cor}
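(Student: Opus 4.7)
The plan is to derive Corollary \ref{cor13} as a direct instance of the abstract Theorem \ref{thm.poi.M}, once I verify that the hypothesis \eqref{hardyM} continues to hold when the weight $\dd$ is replaced by the shifted weight $w(x) := 1 + \dd$ on $\G^{+}$, and that the remainder term $J(u)$ of Theorem \ref{thm.poi.M} has a definite sign in this setup.

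First, I would observe that $\dd \leq 1 + \dd$ pointwise on $\G^{+}$, so $(1+\dd)^{-2} \leq \dd^{-2}$. Combining this with the geometric Hardy inequality \eqref{hardy1} from Theorem \ref{thm.lh.rs1} gives
\[
\int_{\G^{+}}\frac{|u|^{2}}{(1+\dd)^{2}}\,dx \leq \int_{\G^{+}}\frac{|u|^{2}}{\dd^{2}}\,dx \leq 4\int_{\G^{+}}|\nabla_{H}u|^{2}\,dx,
\]
which is exactly \eqref{hardyM} with $M=\G^{+}$, $w(x)=1+\dd$, and $C_{H}=1/4$. This is the only place where the shift matters for the Hardy step: monotonicity of $t\mapsto t^{-2}$ absorbs the shift without worsening the constant.

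Next, I would feed this data into Theorem \ref{thm.poi.M}. The constant from \eqref{CQbM} evaluates to
\[
C_{LH} = A_{2}^{\frac{2Q(2-\beta)}{Q-\beta}}\Bigl(\tfrac{1}{4}\Bigr)^{-\frac{\beta(Q-2)}{Q-\beta}} = A_{2}^{\frac{2Q(2-\beta)}{Q-\beta}} 2^{\frac{2\beta(Q-2)}{Q-\beta}} = C_{LH,1},
\]
matching \eqref{CQb}, so the right-hand side of \eqref{p.hardy.M} acquires precisely the logarithmic factor appearing in \eqref{poin.hardy.G}, together with the remainder $J(u)$.

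The only remaining point, and the main conceptual step, is to discard $J(u)$. Since $w(x)=1+\dd \geq 1$ we have $\log w(x) \geq 0$ for all $x\in\G^{+}$; moreover for $0\leq \beta<2$ and $Q\geq 3$ one has $(\beta-2)-(2Q-4)<0$. Thus the hypothesis $[(\beta-2)-(2Q-4)]\log w(x)\leq 0$ of Remark \ref{rem.J(u)} is satisfied, yielding $J(u)\leq 0$, and the term can be dropped from the right-hand side of \eqref{p.hardy.M}. This produces exactly \eqref{poin.hardy.G}. I do not foresee any genuine obstacle: the content of the corollary lies precisely in recognising that the $+1$ shift simultaneously preserves the Hardy bound (by monotonicity) and forces $\log w\geq 0$, which is the very regime where Remark \ref{rem.J(u)} eliminates the error term.
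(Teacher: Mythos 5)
Your argument is exactly the paper's: instantiate Theorem \ref{thm.poi.M} with $M=\G^{+}$ and $w(x)=1+\dd$, identify $C_{LH}=C_{LH,1}$ via $C_{H}=1/4$, and discard $J(u)$ by Remark \ref{rem.J(u)} since $w\geq 1$. You in fact supply one step the paper leaves implicit, namely that the hypothesis \eqref{hardyM} survives the shift of the weight because $(1+\dd)^{-2}\leq \dd^{-2}$ preserves \eqref{hardy1} with the same constant.
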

\begin{proof}
    The result is an immediate consequence of Theorem \ref{thm.poi.M} is one chooses $w(x)=\dd+1$ and $M=\G^{+}$, and taking  into account Remark \ref{rem.J(u)}.
\end{proof}

The following result is the logarithmic counterpart of the horizontal Hardy inequality on $\G$ as in Theorem \ref{thm.lh.rs2}.
\begin{cor}[Horizontal logarithmic Hardy inequality on $\G$]\label{cor.hor}
    Let $\G$ be as in Theorem \ref{thm.lh.rs2} and let $0\leq \beta \leq 2$. Then for all $u \in C_{0}^{\infty}(\G)$ we have 
     \begin{equation}\label{log.hardyG}
    \int_{\G}\frac{\frac{|u|^{2}}{|x'|^{2-\beta}}}{\left\|\frac{|u|}{|x'|^{\frac{2-\beta}{2}}}\right\|_{L^2(\G)}}\log\left(\frac{|u|^{2}|x'|^{2Q-4}}{\left\|\frac{|u|}{|x'|^{\frac{2-\beta}{2}}}\right\|_{L^2(\G)}}\right)\,dx \leq\frac{Q-\beta}{2-\beta}\log \left(\frac{C_{LH,2}\int_{\G}|\nabla_{H}u|^2\,dx }{\left\|\frac{|u|}{|x'|^{\frac{2-\beta}{2}}}\right\|_{L^2(\G)}}\right)\,,
\end{equation}
where  
 \begin{equation}\label{CQbG}
   C_{LH,2}:=A_{2}^{\frac{2Q(2-\beta)}{Q-\beta}}\left(\frac{N-2}{2} \right)^{-2\frac{\beta(Q-2)}{Q-\beta}}\,.
   \end{equation}
 In particular, when $\left\|\frac{u}{|x'|^{\frac{2-\beta}{2}}}\right\|_{L^2(\G)}=1$, then we have 
 
\begin{equation}\label{L1=1G}
        \int_{\G}\frac{|u|^{2}}{|x'|^{2-\beta}}\log \left(|u| |x'|^{Q-2} \right)\,dx
        \leq \frac{Q-\beta}{2(2-\beta)} \log \left(C_{LH,2}\|\nabla_{H}u\|^{2}_{L^2(\G)}\right)\,  
\end{equation}
\end{cor}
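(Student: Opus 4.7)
The plan is to derive this statement as a direct application of the abstract geometric logarithmic Hardy inequality (Theorem \ref{thm.logHardyM}), specialised to the weight inherited from the horizontal Hardy inequality of Theorem \ref{thm.lh.rs2}. Concretely, I would take $M=\G$ and $w(x)=|x'|$, where $x=(x',x'')\in \R^{N}\times \R^{n-N}$ reflects the stratification. With this choice, the abstract hypothesis \eqref{hardyM} reduces exactly to \eqref{hardy2}, i.e.\ it is satisfied with Hardy constant
\[
C_{H}=\left(\frac{N-2}{2}\right)^{2}.
\]

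Having verified the hypothesis, the plan is to invoke Theorem \ref{thm.logHardyM} directly: for $0\le\beta<2$ and $u\in C_{0}^{\infty}(\G)$ the conclusion \eqref{log.hardyM} gives
\[
\int_{\G}\frac{\frac{|u|^{2}}{|x'|^{2-\beta}}}{\left\|\tfrac{u}{|x'|^{(2-\beta)/2}}\right\|_{L^{2}(\G)}^{2}}\log\!\left(\frac{|u|^{2}|x'|^{2Q-4}}{\left\|\tfrac{u}{|x'|^{(2-\beta)/2}}\right\|_{L^{2}(\G)}^{2}}\right)dx\leq \frac{Q-\beta}{2-\beta}\log\!\left(\frac{C_{LH}\int_{\G}|\nabla_{H}u|^{2}dx}{\left\|\tfrac{u}{|x'|^{(2-\beta)/2}}\right\|_{L^{2}(\G)}^{2}}\right),
\]
with $C_{LH}$ as in \eqref{CQbM}. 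It remains only to plug in the value of $C_H$ to identify $C_{LH}$ with $C_{LH,2}$ from \eqref{CQbG}: since
\[
C_{LH}=A_{2}^{\frac{2Q(2-\beta)}{Q-\beta}}C_{H}^{-\frac{\beta(Q-2)}{Q-\beta}}=A_{2}^{\frac{2Q(2-\beta)}{Q-\beta}}\left(\frac{N-2}{2}\right)^{-2\frac{\beta(Q-2)}{Q-\beta}},
\]
this is precisely \eqref{CQbG}, which establishes \eqref{log.hardyG}.

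For the normalised form \eqref{L1=1G}, the plan is to specialise the main inequality to the case $\|u/|x'|^{(2-\beta)/2}\|_{L^{2}(\G)}=1$, in which both $L^{2}$ norms in the denominators disappear. Using $\log(|u|^{2}|x'|^{2Q-4})=2\log(|u||x'|^{Q-2})$ on the left-hand side and dividing by $2$ yields the stated inequality. No obstacles are anticipated in either step: the only care needed is to track that the weight $w(x)=|x'|$ may vanish on the subspace $\{x'=0\}$, but this is a set of Haar measure zero in $\G$ and is already accommodated by the hypothesis of Theorem \ref{thm.logHardyM} (which permits $w$ to vanish on a set of measure zero), so no additional argument is required.
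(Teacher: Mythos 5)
Your proposal is correct and follows exactly the paper's own route: the paper likewise obtains the corollary as an immediate consequence of Theorems \ref{thm.lh.rs2} and \ref{thm.logHardyM} with the choice $M=\G$ and $w(x)=|x'|$, so that $C_H=\left(\frac{N-2}{2}\right)^2$ and $C_{LH}$ becomes $C_{LH,2}$. Your added remarks on the normalised case and on $w$ vanishing only on the measure-zero set $\{x'=0\}$ (and the implicit restriction to $0\leq\beta<2$, as required by Theorem \ref{thm.logHardyM}) are consistent with the paper's argument.
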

\begin{proof}
    The proof is an immediate consequence of Theorems \ref{thm.lh.rs2} amd \ref{thm.logHardyM} if one chooses $w(x)=|x'|.$
\end{proof}
\begin{cor}[Horizontal Hardy-Poincar\'e inequality on $\G$]\label{cor15}
Let $\G$ be a stratified group as in the hypothesis of Theorem \ref{thm.lh.rs2}. Then  for $0\leq \beta < 2$, $q>1$, and for all $u \in C_{0}^{\infty}(\G)$ we have
    \begin{equation}\label{poin.hardy.G}
    \begin{split}
          &\int_{\G}\frac{\frac{|u|^2}{|x'|^{2-\beta}}}{\left\|\frac{u}{|x'|^{\frac{2-\beta}{2}}}\right\|_{L^2(\G)}^2}\,dx-\left(\int_{\G}\frac{\frac{|u|^{\frac{2}{q}}}{|x'|^{\frac{2-\beta}{q}}}}{\left\|\frac{u}{|x'|^{\frac{2-\beta}{2}}}\right\|_{L^2(\G)^2}^{\frac{2}{q}}}\,dx \right)^{q} \\&
      \leq  \frac{(q-1)(Q-\beta)}{2-\beta} \log\left(\frac{C_{LH,2}\int_{\G}|\nabla_{H}u|^2\,dx}{\left\|\frac{u}{|x'|^{\frac{2-\beta}{2}}}\right\|_{L^2(\G)}^2}\right)+J(u)\,,
      \end{split}
\end{equation}
    where $C_{LH,2}$ is as in \eqref{CQbG}, and we have defined $J(u)$ by 
\[
J(u):= (q-1) \int_{\G}\frac{\frac{|u|^2}{|x'|^{2-\beta}}}{\left\|\frac{u}{|x'|^{\frac{2-\beta}{2}}}\right\|^{2}_{L^2(\G)}}\log \left(\frac{|u|^2 |x'|^{(\beta-2)-(2Q-4)}}{\left\|\frac{u}{|x'|^{\frac{2-\beta}{2}}}\right\|^{2}_{L^2(\G)}}\right)\,dx\,.
\]  
\end{cor}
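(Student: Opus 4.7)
The plan is to obtain Corollary \ref{cor15} as a direct specialisation of the abstract geometric Hardy-Poincar\'e inequality of Theorem \ref{thm.poi.M}. In that theorem the only structural input is the validity of a Hardy inequality of the form \eqref{hardyM} on the open set $M\subset \G$ for some measurable non-negative weight $w$, together with the associated constant $C_H$. Here the natural candidate is $M=\G$ and $w(x)=|x'|$, where $x=(x',x'')\in\R^N\times\R^{n-N}$ as in the statement.

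First I would verify that the hypotheses of Theorem \ref{thm.poi.M} are satisfied in this setting. This is immediate from Theorem \ref{thm.lh.rs2}: the horizontal Hardy inequality \eqref{hardy2} is precisely \eqref{hardyM} on $M=\G$ with $w(x)=|x'|$ and constant
\[
C_H=\left(\tfrac{N-2}{2}\right)^{2}.
\]
Note that $w(x)=|x'|$ is non-negative and measurable on $\G$, and vanishes only on the set $\{x'=0\}$, which has zero Lebesgue measure in $\G$; so the abstract framework applies.

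Next I would substitute $w(x)=|x'|$ into the conclusion \eqref{p.hardy.M} of Theorem \ref{thm.poi.M}. The left-hand side of \eqref{p.hardy.M}, the logarithmic quantity $J(u)$, and the normalising factor $\left\|u/w^{(2-\beta)/2}\right\|_{L^2(M)}^{2}$ become exactly the expressions stated in \eqref{poin.hardy.G} and in the definition of $J(u)$ in Corollary \ref{cor15}. It only remains to identify the constant. Using the definition \eqref{CQbM} of $C_{LH}$ with $C_H=\left(\tfrac{N-2}{2}\right)^{2}$, we obtain
\[
C_{LH}=A_{2}^{\frac{2Q(2-\beta)}{Q-\beta}}\left(\tfrac{N-2}{2}\right)^{-2\cdot\frac{\beta(Q-2)}{Q-\beta}},
\]
which is precisely $C_{LH,2}$ as defined in \eqref{CQbG}. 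This yields \eqref{poin.hardy.G} and completes the proof.

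There is essentially no obstacle here beyond bookkeeping: the result is a direct corollary obtained by feeding the horizontal Hardy inequality of Theorem \ref{thm.lh.rs2} into the abstract machinery of Theorem \ref{thm.poi.M}. The only mild care needed is to check that the constant $C_H=\left(\tfrac{N-2}{2}\right)^{2}$ from \eqref{hardy2} produces exactly the exponent structure in \eqref{CQbG}, which is the same routine verification as was carried out for Corollary \ref{cor.hor}.
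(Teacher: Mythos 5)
Your proposal is correct and follows exactly the paper's route: the paper also derives this corollary as an immediate consequence of Theorem \ref{thm.poi.M} applied with $M=\G$ and $w(x)=|x'|$, the Hardy hypothesis being supplied by Theorem \ref{thm.lh.rs2}. Your verification that $C_H=\left(\tfrac{N-2}{2}\right)^{2}$ turns $C_{LH}$ into $C_{LH,2}$ is the only bookkeeping required, and you have done it correctly.
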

\begin{proof}
    The result follows by Proposition \ref{thm.poi.M}.
\end{proof}

For the horizontal Hardy inequality \eqref{hardy2} we obtain its analogue with respect to a semi-Gaussian measure on the stratified group $\G$; i.e., a product of measure $\mu_1 \times \mu_2$, where $\mu_1$ is the Gaussian measure on the first stratum of (the Lie algebra of) $\G$, and $\mu_2$ is a Lebesgue measure. This type of horizontal Hardy inequality, given in the next theorem, can be called the ``horizontal Gross-type Hardy inequality'', since it serves as the analogue of the Gross's Logarithmic Sobolev inequality on $\R^n$ \cite{Gro75} with weights. The following proof is an adaptation of \cite[Theorem 7.1]{CKR21b} taking into account the presence of the weight.
\begin{thm}[Horizontal Gross-type Hardy inequality on $\G$]\label{thm.gaus.Hardy.G}
Let $\G$ be a stratified group with homogeneous dimension $Q \geq 3$, topological dimension $n$, and let $N$ be the dimension of the first stratum of its Lie algebra, i.e., for $x \in \G$ we can write $x=(x',x'') \in \mathbb{R}^{N} \times \mathbb{R}^{n-N}$. 
Then the following weighted ``semi-Gaussian'' geometric logarithmic Hardy inequality is satisfied
\begin{equation}
    \label{gaus.log.Hard}
    \int_{\G}\frac{|g(x)|^2}{|x'|^{2-\beta}} \log \left(|x'|^{Q-2}|g(x)| \right)\,d\mu \leq \int_{\G} |\nabla_{H}g|^2\,d\mu\,,
\end{equation}
for all $0\leq \beta <2$, and for all $g$ such that $\left\|\frac{g}{|x'|^{\frac{2-\beta}{2}}} \right\|_{L^2(\mu)}=1,$
 where $\mu=\mu_1 \otimes \mu_2$, and $\mu_1$ is the Gaussian measure on $\mathbb{R}^{N}$ given by $d\mu_1=\gamma e^{-\frac{|x'|^2}{2}}dx'$, for $x'\in \mathbb{R}^{N}$, where the normalisation constant $\gamma$ is given by 
 \begin{equation}
     \label{k-w}
     \gamma:= 
     \left(\frac{Q-\beta}{2(2-\beta)}C_{LH,2}e^{\left(\frac{N}{2}+\frac{1}{4} \right)\left(\frac{2(2-\beta)}{Q-\beta} \right)-1} \right)^{\frac{Q-\beta}{2-\beta}}
 \end{equation} 
 and $\mu_2$ is the Lebesgue measure $dx''$ on $\mathbb{R}^{n-N}$, and the constant $C_{LH,2}$ is given by \eqref{CQbG}.
\end{thm}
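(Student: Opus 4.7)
The plan is to adapt the classical Gross-type argument: derive the semi-Gaussian inequality \eqref{gaus.log.Hard} from the Lebesgue horizontal logarithmic Hardy inequality (Corollary \ref{cor.hor}) by means of the substitution $u(x) = \sqrt{\gamma}\, g(x)\, e^{-|x'|^2/4}$. Under this choice, the Lebesgue-weighted normalization $\|u/|x'|^{(2-\beta)/2}\|_{L^2(\G)} = 1$ needed to apply \eqref{L1=1G} translates precisely into the $\mu$-normalization $\|g/|x'|^{(2-\beta)/2}\|_{L^2(\mu)} = 1$ built into the hypothesis, via the definition $d\mu_1 = \gamma e^{-|x'|^2/2}\,dx'$.

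First I would expand the left-hand side of \eqref{L1=1G} via $\log(|u||x'|^{Q-2}) = \log(|g||x'|^{Q-2}) + \log\sqrt{\gamma} - \tfrac{|x'|^2}{4}$, which yields the desired target integrand plus the constant $\log\sqrt{\gamma}$ (entering with coefficient $1$ thanks to the constraint) and the correction $-\tfrac{1}{4}\int_\G g^2 |x'|^{\beta}\,d\mu$. Next, for the right-hand side, I would compute $\|\nabla_H u\|_{L^2(\G)}^2$ by expanding $|\nabla_H(gh)|^2 = h^2|\nabla_H g|^2 + 2gh\,\nabla_H g\cdot \nabla_H h + g^2|\nabla_H h|^2$ with $h = e^{-|x'|^2/4}$ and integrating by parts on the cross term. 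The key tool here is that the horizontal vector fields $X_i$, when applied to any function of $x'$ alone, reduce to the Euclidean partial derivatives $\partial_{x'_i}$, so that the standard Gaussian integration-by-parts formula is available and produces the identity
\[
\|\nabla_H u\|_{L^2(\G)}^2 = \int_\G |\nabla_H g|^2\,d\mu + \frac{N}{2}\int_\G g^2\,d\mu - \frac{1}{4}\int_\G g^2|x'|^2\,d\mu.
\]

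Then I would apply the elementary concavity bound $\log y \leq y/c + \log c - 1$ to the right-hand side of \eqref{L1=1G}, choosing the free parameter $c = \tfrac{(Q-\beta)\,C_{LH,2}}{2(2-\beta)}$ so that the coefficient in front of $\int_\G |\nabla_H g|^2\,d\mu$ collapses exactly to $1$, thereby recovering the gradient side of the target inequality. The explicit value of $\gamma$ prescribed by \eqref{k-w} is then engineered so that $\log\sqrt{\gamma}$ balances the constant $\tfrac{Q-\beta}{2(2-\beta)}(\log c - 1)$ together with the $\tfrac{N}{2}+\tfrac{1}{4}$ contribution coming from the integration-by-parts residuals. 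The main obstacle, and the most delicate part of the bookkeeping, is controlling the three $g$-dependent remainders $\int g^2\,d\mu$, $\int g^2|x'|^2\,d\mu$ and $\int g^2|x'|^{\beta}\,d\mu$ generated respectively by the Gaussian integration by parts and by the $-|x'|^2/4$ factor: these must be reconciled, using the normalization $\int g^2/|x'|^{2-\beta}\,d\mu = 1$ and the calibrated choice of $\gamma$, so that the surviving right-hand side is exactly $\int_\G |\nabla_H g|^2\,d\mu$, as desired.
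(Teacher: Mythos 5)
Your strategy coincides with the paper's: set $u=\sqrt{\gamma}\,e^{-|x'|^{2}/4}g$, observe that the weighted $\mu$-normalisation of $g$ becomes the weighted Lebesgue normalisation of $u$, feed $u$ into \eqref{L1=1G}, compute $\|\nabla_{H}u\|_{L^{2}(dx)}^{2}$ by expanding the square and integrating by parts (using that $X_{i}$ acts as $\partial_{x_{i}'}$ on functions of $x'$ and that the $\partial_{x_{j}''}$-components of the cross term integrate to zero), and close with the tangent-line bound on the logarithm; your choice of $c$ reproduces exactly the constant $\gamma$ in \eqref{k-w}.

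The step you flag as ``the most delicate part of the bookkeeping'' but do not carry out is a genuine gap, and it is the crux. Your (correctly stated) identity is
\[
\int_{\G}|\nabla_{H}u|^{2}\,dx=\int_{\G}|\nabla_{H}g|^{2}\,d\mu+\frac{N}{2}\int_{\G}|g|^{2}\,d\mu-\frac14\int_{\G}|g|^{2}|x'|^{2}\,d\mu\,.
\]
The $|x'|^{\beta}$-remainder is harmless: since $\beta<2$, splitting into $|x'|\le 1$ and $|x'|\ge 1$ gives $\frac14\int|g|^{2}|x'|^{\beta}\,d\mu\le\frac14+\frac14\int|g|^{2}|x'|^{2}\,d\mu$ using only the weighted normalisation, and the quadratic piece then cancels against the identity above; this is the paper's estimate \eqref{EQ:aterm}. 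But the term $\frac{N}{2}\int_{\G}|g|^{2}\,d\mu$ is \emph{not} controlled by $\bigl\|g/|x'|^{(2-\beta)/2}\bigr\|_{L^{2}(\mu)}=1$: a $g$ concentrated where $|x'|\approx R$ has $\int|g|^{2}\,d\mu\approx R^{2-\beta}$, which is unbounded, and trying to dominate it by $\int|g|^{2}|x'|^{2}\,d\mu$ and then reinserting the identity is circular. No choice of the constant $\gamma$ can absorb an unbounded $g$-dependent quantity. The paper's own proof disposes of this term by taking ${\rm Re}\int\overline{(\partial_{x_{i}'}u)}\,x_{i}'u\,dx=-\frac12$, i.e.\ by using $\int_{\G}|u|^{2}\,dx=\int_{\G}|g|^{2}\,d\mu=1$, which is an \emph{unweighted} normalisation not implied by the weighted one in the hypothesis. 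So to complete your argument you must either impose (or justify) $\int_{\G}|g|^{2}\,d\mu=1$ at that point, or find a genuinely different way to control $\frac{N}{2}\int|g|^{2}\,d\mu$; the normalisation together with the calibrated $\gamma$, as you propose, does not suffice.
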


\begin{proof}
    Assume that $g$ is such that $\left\| \frac{g}{|x'|^{\frac{2-\beta}{2}}}\right\|_{L^2(\mu)}=1$ , where $\mu$ is as in the hypothesis. Define $u$ by 
\[
u(x)=\gamma^{1/2}e^{-\frac{|x'|^2}{4}}g(x)\,,
\]
for $\gamma$ as in \eqref{k-w}. Then we have \[
1=\int_{\G}\frac{|g(x)|^2}{|x'|^{2-\beta}}\,d\mu=\int_{\G}\gamma^{-1}e^{\frac{|x'|^2}{2}}\frac{|u(x)|^2}{|x'|^{2-\beta}}\,d\mu=\int_{\G}\frac{|u(x)|^2}{|x'|^{2-\beta}}\,dx\,,
\]
i.e. we have proved that $\left\|\frac{u}{|x'|^{\frac{2-\beta}{2}}} \right\|_{L^2(\G)}=1$.
An application of the logarithmic Hardy  inequality \eqref{L1=1G}, together with the properties of the logarithm, give
\begin{equation}
\label{log.Hold,u}
\begin{split}
& \int_{\G}\frac{|g(x)|^2}{|x'|^{2-\beta}} \log \left(|x'|^{Q-2}|g(x)| \right)\,d\mu \\& \leq  \int_{\G}\frac{|u(x)|^2}{|x'|^{2-\beta}} \log\left(\gamma^{-1/2}e^{\frac{|x'|^2}{4}}|x'|^{Q-2}u(x) \right)\,dx\nonumber\\ &
\leq\frac{Q-\beta}{2(2-\beta)} \log\left(C_{LH,2} \int_{\G}|\nabla_H u(x)|^2\,dx \right)\nonumber\\ &
+\log(\gamma^{-1/2})+\int_{\G}\frac{|x'|^2}{4}\frac{|u(x)|^2}{|x'|^{2-\beta}}\,dx\,.
\end{split}
\end{equation}
Note that we also have
\begin{multline}\label{EQ:aterm}
\int_{\G}\frac{|x'|^2}{4}\frac{|u(x)|^2}{|x'|^{2-\beta}}\,dx=\frac{1}{4} \int_{\G}|x'|^\beta |u(x)|^2\,dx=\int_{|x'|\leq 1} +\int_{|x'|\geq 1}  \\ \leq
 \frac{1}{4}\int_{|x'|\leq 1} \frac{|u(x)|^2}{|x'|^{2-\beta}}\,dx+
 \int_{|x'|\geq 1}
 \frac{|x'|^2}{4} |u(x)|^2\,dx
 \\ \leq \frac{1}{4}+\int_\G  \frac{|x'|^2}{4} |u(x)|^2\,dx\,,
\end{multline}
where we have used that $\beta<2$, and that $\left\|\frac{u}{|x'|^{\frac{2-\beta}{2}}} \right\|_{L^2(\G)}=1$. Now, to estimate the length 
\[
|\nabla_H g(x)|=\sqrt{\sum_{i=1}^{N}|X_ig(x)|^2}\,,
\]
first recall (see e.g. \cite{FR16}) that the vector fields $X_i$ for $i=1,\ldots,N$, are given by 
\[
X_i=\partial_{x_{i}^{'}}+\sum_{j=1}^{n-N}p_{j}^{i}(x')\partial_{x''_{j}}\,.
\]
For $i=1,\ldots,N$ we have
\begin{eqnarray}\label{Xg}
|X_ig(x)|^2 &=& \gamma^{-1} e^{\frac{|x'|^2}{2}} \left|X_iu(x)+\frac{x'_{i}}{2}u(x) \right|^{2}\nonumber\\
&=& \gamma^{-1} e^{\frac{|x'|^2}{2}} \left(|X_iu(x)|^2+\frac{(x'_{i})^2}{4}|u(x)|^2+{\rm Re} \overline{(X_iu(x))}x'_{i}u(x) \right)\,.
\end{eqnarray}
Moreover, for $x'_{i}$, $i=1,\ldots,N$, using integration by parts we get
\begin{eqnarray*}
{\rm Re}\int_{\G}\overline{(\partial_{x'_{i}}u(x))}x'_{i}u(x)\,dx & =& -{\rm Re}\int_{\G}(\partial_{x'_{i}}u(x))x'_{i}\overline{u(x)}\,dx-\int_{\G}|u(x)|^2\,dx\\
&=& -{\rm Re}\int_{\G}\overline{(\partial_{x'_{i}}u(x))}x'_{i}u(x)\,dx-1\,,
\end{eqnarray*}
that is we have
\begin{equation}
    \label{int.parts1}
    {\rm Re}\int_{\G}\overline{(\partial_{x'_{i}}u(x))}x'_{i}u(x)\,dx=-\frac{1}{2}\,.
\end{equation}
Applying similar arguments, for $j=1,\ldots,n-N$ and for $i=1,\ldots,N$, we get
\begin{eqnarray*}
   {\rm Re} \int_{\G}p_{j}(x')\overline{(\partial_{x_{j}''}u(x))}x'_{i}u(x)& = & - {\rm Re}\int_{\G}\partial_{x_{j}''}((p_{j}(x')u(x)x'_{i})\overline{u(x)}\,dx\\
    &=&- {\rm Re}\int_{\G}p_{j}(x')\overline{(\partial_{x_{j}''}u(x))}x'_{i}u(x)\,dx\,,
\end{eqnarray*}
or equivalently
\begin{equation}  \label{int.parts2}
   {\rm Re} \int_{\G}p_{j}(x')\overline{(\partial_{x_{j}''}u(x))}x'_{i}u(x)\,dx=0\,.
\end{equation}
Combining \eqref{int.parts1} and \eqref{int.parts2} we arrive at 
\[
\int_{\G}{\rm Re}\overline{(X_iu(x))}x'_{i}u(x)\,dx=-\frac{1}{2}\,, \quad \forall i=1,\ldots,n_1\,.
\]
Plugging \eqref{int.parts1} and \eqref{int.parts2} into \eqref{Xg} we get the 
\begin{equation}\label{thm.eq.nablag}
\int_{\G}|\nabla_{H}g(x)|^2\,d\mu=\int_{\G}|\nabla_{H}u(x)|^2\,dx+\int_{\G}\frac{|x'|^2}{4}|u(x)|^2\,dx-\frac{N}{2}\,.
\end{equation}
Combining inequalities \eqref{log.Hold,u},  and \eqref{thm.eq.nablag}, we see that to prove the desired inequality \eqref{gaus.log.Hard} it is enough to show that 
\[
\frac{Q-\beta}{2(2-\beta)} \log\left(C_{LH,2} \int_{\G} |\nabla_H u(x)|^2\,dx \right)+\log (\gamma^{-1/2})+\frac{1}{4} \leq \int_{\G} |\nabla_H u(x)|^2\,dx-\frac{N}{2}\,,
\]
which can be equivalently written as 
\[
\log\left(C_{LH,2}\gamma^{-\frac{2-\beta}{Q-\beta}}e^{\left(\frac{N}{2}+\frac{1}{4}\right)\left(\frac{2(2-\beta)}{Q-\beta} \right)}\int_{\G}|\nabla_H u(x)|^2\,dx \right) \leq \frac{2(2-\beta)}{Q-\beta}\int_{\G}|\nabla_H u(x)|^2\,dx\,.
\] 
Now, since $\log r \leq r-1$, for all $r>0$, it suffices to show that 
\[
e^{-1}C_{LH,2}\gamma^{-\frac{2-\beta}{Q-\beta}}e^{\left(\frac{N}{2}+\frac{1}{4}\right)\left(\frac{2(2-\beta)}{Q-\beta} \right)}\int_{\G}|\nabla_H u(x)|^2\,dx \leq \frac{2(2-\beta)}{Q-\beta} \int_{\G} |\nabla_H u(x)|^2\,dx\,,
\]
where the last is satisfied as an equality for $\gamma$ as in \eqref{k-w}, and we have
\begin{eqnarray*}
&&\log\left( C_{LH,2}\gamma^{-\frac{2-\beta}{Q-\beta}}e^{\left(\frac{N}{2}+\frac{1}{4}\right)\left(\frac{2(2-\beta)}{Q-\beta} \right)}\int_{\G}|\nabla_H u(x)|^2\,dx\right)\\
&=&\log\left(e^{-1}C_{LH,2}\gamma^{-\frac{2-\beta}{Q-\beta}}e^{\left(\frac{N}{2}+\frac{1}{2}\right)\left(\frac{2(2-\beta)}{Q-\beta} \right)}\int_{\G}|\nabla_H u(x)|^2\,dx \right)+1\\
&\leq & \frac{2(2-\beta)}{Q-\beta}\int_{\G}|\nabla_H u(x)|^2\,dx\,,
\end{eqnarray*}
completing the proof.
\end{proof}
\begin{rem}
Recall that in  \cite[Theorem 5.2]{CKR21a} the authors proved the following inequality 
\begin{equation}\label{Gh.CKR}
 \int_{\G}\frac{|g(x)|^2}{N(x)^{2-\beta}} \log \left(N(x)^{\frac{(Q-2)(1-\beta)}{2-\beta}}|g(x)| \right)\,d\mu \leq \int_{\G} |\nabla_{H}g|^2\,d\mu\,,
\end{equation}
where $\mu$ as as in Theorem \ref{thm.gaus.Hardy.G}, but with a normalisation constant $\gamma$ depending on a constant $M$, which in turn depends on the quasi-norm $N$ and on $\G$, such that $|x'|\leq M N(x)$, for all $x \in \G$. We note that, in contrast to the hypothesis of  \cite[Theorem 5.2]{CKR21a}, in Theorem \ref{thm.gaus.Hardy.G} the quasi-norm $N$ in this case, is a norm only on the first stratum, and the normalisation constant does not depend on the constant $M$. However, both inequalities \eqref{Gh.CKR} and \eqref{gaus.log.Hard}, when regarded in the Euclidean setting $\G=\R^n$, can coincide if one considers $N(x)=|x|$, where $|x|$ stands for the Euclidean norm of $x \in \R^n$, since in this case we also have $|x'|=|x|$. 
\end{rem}
 As the next result shows, on a stratified group $\G$ the analogue of the Horizontal Hardy-Poincar\'e inequality with respect to the semi-Gaussian measure $\mu$ holds true.
\begin{cor}[Horizontal Gross-type Hardy-Poincar\'e inequality on $\G$. I]\label{cor.hor.poi} Let $\G$ and $\mu$ be as in the hypothesis of Theorem \ref{thm.gaus.Hardy.G}. Then for  all $0\leq \beta <2$, and for all  $g \in C_{0}^{\infty}(\G)$, we have
\begin{equation}\label{hor.poi}
 \int_{\G}\frac{|g|^2}{|x'|^{2-\beta}}\,d\mu-\left(\int_{\G}\frac{|g|^{\frac{2}{q}}}{|x'|^{\frac{2-\beta}{q}}}\,d\mu\right)^q \leq 2(q-1)\int_{\G}|\nabla_{H}g|^2\,d\mu+J(g)\,,
\end{equation}
where we have set
   \[
J(g):=2(q-1)\int_{\G}\frac{|g|^2}{|x'|^{2-\beta}}\log \left(|g||x'|^{^{\frac{\beta-2}{2}-(Q-2)}}\left\|\frac{g}{|x'|^{\frac{2-\beta}{2}}} \right\|_{L^2(\mu)}^{-1}\right)\,d\mu\,.
\]
\end{cor}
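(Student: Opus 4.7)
The plan is to mimic the proof of Theorem~\ref{thm.poi.M} step by step, but swap the Lebesgue measure $dx$ for the semi-Gaussian measure $d\mu$ and the logarithmic Hardy inequality~\eqref{log.hardyM} for the horizontal Gross-type Hardy inequality~\eqref{gaus.log.Hard} of Theorem~\ref{thm.gaus.Hardy.G}. Concretely, I would set $K := \left\|g/|x'|^{(2-\beta)/2}\right\|_{L^2(\mu)}$ and $f := g/(|x'|^{(2-\beta)/2}K)$, so that $\|f\|_{L^2(\mu)}=1$. The function $b(q) := q\log\int_\G |f|^{2/q}\,d\mu$ is convex by Cauchy--Schwarz applied against $d\mu$, and this convexity transfers to $q\mapsto e^{b(q)}$; hence the difference quotient $\phi(q) = (e^{b(1)} - e^{b(q)})/(1-q)$ is non-increasing for $q\ge 1$, and its limit as $q\to 1^+$, computed by l'H\^opital exactly as in the proof of Theorem~\ref{thm.poi.M}, equals $2\int_\G |f|^2\log|f|\,d\mu$.

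The outcome of the convexity step is the inequality
\begin{equation*}
\int_\G |f|^2\,d\mu - \left(\int_\G |f|^{2/q}\,d\mu\right)^q \le 2(q-1)\int_\G |f|^2 \log |f|\,d\mu.
\end{equation*}
Substituting back $f = g/(|x'|^{(2-\beta)/2}K)$ and multiplying through by $K^2$ turns this into
\begin{equation*}
\int_\G \frac{|g|^2}{|x'|^{2-\beta}}\,d\mu - \left(\int_\G \frac{|g|^{2/q}}{|x'|^{(2-\beta)/q}}\,d\mu\right)^q \le 2(q-1)\int_\G \frac{|g|^2}{|x'|^{2-\beta}} \log\!\left(\frac{|g|}{|x'|^{(2-\beta)/2}K}\right)d\mu,
\end{equation*}
which is the exact analogue of~\eqref{diff} in the proof of Theorem~\ref{thm.poi.M}.

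The final step is the splitting of the logarithm on the right, using the identity $\frac{\beta-2}{2} = (Q-2) + \bigl[\frac{\beta-2}{2}-(Q-2)\bigr]$ to write
\begin{equation*}
\log\!\left(\frac{|g|\,|x'|^{(\beta-2)/2}}{K}\right) = \log\!\left(|g|\,|x'|^{Q-2}\right) + \log\!\left(|g|\,|x'|^{(\beta-2)/2-(Q-2)}K^{-1}\right) - \log|g|,
\end{equation*}
following the pattern of the splitting applied in the proof of Theorem~\ref{thm.poi.M}. The first summand produces the integral $2(q-1)\int_\G \frac{|g|^2}{|x'|^{2-\beta}}\log(|g||x'|^{Q-2})\,d\mu$, which is bounded by $2(q-1)\int_\G |\nabla_H g|^2\,d\mu$ upon invoking~\eqref{gaus.log.Hard}; the second summand is, by definition, $J(g)$.

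The main technical obstacle is the use of Theorem~\ref{thm.gaus.Hardy.G} in the last step, because~\eqref{gaus.log.Hard} is stated under the normalisation $\|g/|x'|^{(2-\beta)/2}\|_{L^2(\mu)}=1$, which need not hold for the generic $g\in C_0^\infty(\G)$. I would handle this by applying \eqref{gaus.log.Hard} to $g/K$ (which is normalised) and then rescaling both sides by $K^2$, so that the resulting contribution is written purely in terms of $g$ and $\int_\G |\nabla_H g|^2\,d\mu$; any residual constant absorbable into $J(g)$ is accounted for thanks to the explicit $\log K^{-1}$ factor already present inside $J(g)$. Apart from this careful tracking of the normalisation constant $K$, every other step is a direct adaptation of the corresponding passage in the proof of Theorem~\ref{thm.poi.M}, so no new ingredient is required.
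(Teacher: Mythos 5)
Your proposal follows the paper's proof step for step in its first two stages: the convexity of $b(q)=q\log\int_\G|f|^{2/q}\,d\mu$ via Cauchy--Schwarz, the monotone difference quotient, the l'H\^opital computation of its limit at $q=1$, and the substitution $f=g\,|x'|^{-(2-\beta)/2}K^{-1}$ with $K:=\left\|g/|x'|^{(2-\beta)/2}\right\|_{L^2(\mu)}$ are exactly the paper's argument, and up to the inequality
\begin{equation*}
\int_\G \frac{|g|^2}{|x'|^{2-\beta}}\,d\mu-\left(\int_\G\frac{|g|^{2/q}}{|x'|^{(2-\beta)/q}}\,d\mu\right)^{q}\leq 2(q-1)\int_\G\frac{|g|^2}{|x'|^{2-\beta}}\log\left(|g|\,|x'|^{\frac{\beta-2}{2}}K^{-1}\right)d\mu
\end{equation*}
everything is correct.

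The gap is in the final assembly. Your three-term identity for the logarithm is exact, but it produces the summand $-\log|g|$, hence the contribution $-2(q-1)\int_\G\frac{|g|^2}{|x'|^{2-\beta}}\log|g|\,d\mu$ on the right-hand side, and you never dispose of it. Moreover, applying \eqref{gaus.log.Hard} to the normalised function $g/K$ and rescaling bounds $\int_\G\frac{|g|^2}{|x'|^{2-\beta}}\log\bigl(|x'|^{Q-2}|g|K^{-1}\bigr)\,d\mu$, not $\int_\G\frac{|g|^2}{|x'|^{2-\beta}}\log\bigl(|x'|^{Q-2}|g|\bigr)\,d\mu$; the two differ by $K^2\log K$. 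Collecting all pieces, your argument yields the claimed bound plus the extra term $2(q-1)\bigl(K^2\log K-\int_\G\frac{|g|^2}{|x'|^{2-\beta}}\log|g|\,d\mu\bigr)=-2(q-1)\int_\G\frac{|g|^2}{|x'|^{2-\beta}}\log(|g|K^{-1})\,d\mu$, whose sign is not controlled; the assertion that the residual is ``absorbable into $J(g)$'' is precisely the point where the proof fails to close. For what it is worth, the paper's own proof splits the logarithm as $\log(|g||x'|^{\frac{\beta-2}{2}}K^{-1})=\log(|g||x'|^{Q-2}K^{-1})+\log(|g||x'|^{\frac{\beta-2}{2}-(Q-2)}K^{-1})$, which is not an identity --- it overshoots by the same quantity $\log(|g|K^{-1})$ --- so you have reproduced, rather than repaired, the weak point of the original argument; an exact splitting such as $\log(|g||x'|^{\frac{\beta-2}{2}}K^{-1})=\log(|x'|^{Q-2}|g|K^{-1})+\log(|x'|^{\frac{\beta-2}{2}-(Q-2)})$ does close the argument, but with a $g$-independent remainder in place of the stated $J(g)$.
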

\begin{proof}
For $u$ such that $\|u\|_{L^2(\G)}=1$, let us consider the function $b=b(q):=q \log \left(\int_{\G}|u|^{\frac{2}{q}}\,d\mu \right)$. Following the line of arguments developed in the proof of Theorem \ref{thm.poi.M} we get that 
\[
\int_{\G}|u|^2\,d\mu-\left(\int_{\G}|u|^{\frac{2}{q}}\right)^q\leq 2(q-1)\int_{\G}|u|^2\log |u|\,d\mu \,.
\]
Now, if we replace $u(x)$ by $\frac{g(x)}{|x'|^{\frac{2-\beta}{2}}}\left\|\frac{g}{|x'|^{\frac{2-\beta}{2}}} \right\|_{L^2(\mu)}^{-1}$, the latter inequality becomes 
\begin{equation*}    
\begin{split}
        &\ \int_{\G}\frac{|g|^2}{|x'|^{2-\beta}}\left\|\frac{g}{|x'|^{\frac{2-\beta}{2}}} \right\|_{L^2(\mu)}^{-2}\,d\mu   -\left(\int_{\G}\frac{|g|^{\frac{2}{q}}}{|x'|^{\frac{2-\beta}{q}}}\left\|\frac{g}{|x'|^{\frac{2-\beta}{2}}} \right\|_{L^2(\mu)}^{-\frac{2}{q}}\,d\mu\right)^q \\&
      \leq 2(q-1)\int_{\G}\frac{|g|^2}{|x'|^{2-\beta}}\left\|\frac{g}{|x'|^{\frac{2-\beta}{2}}} \right\|_{L^2(\mu)}^{-2}\log \left(|g||x'|^{\frac{\beta-2}{2}}\left\|\frac{g}{|x'|^{\frac{2-\beta}{2}}} \right\|_{L^2(\mu)}^{-1}\right)\,d\mu \,.  
      \end{split}
\end{equation*}
Now, we can write
\begin{equation*}
    \begin{split}
       &\ \int_{\G}\frac{|g|^2}{|x'|^{2-\beta}}\log \left(|g||x'|^{\frac{\beta-2}{2}}\left\|\frac{g}{|x'|^{\frac{2-\beta}{2}}} \right\|_{L^2(\mu)}^{-1}\right)\,d\mu \\&
       = \int_{\G}\frac{|g|^2}{|x'|^{2-\beta}}\log \left(|g||x'|^{Q-2}\left\|\frac{g}{|x'|^{\frac{2-\beta}{2}}} \right\|_{L^2(\mu)}^{-1}\right)\,d\mu \\&
       + \int_{\G}\frac{|g|^2}{|x'|^{2-\beta}}\log \left(|g||x'|^{\frac{\beta-2}{2}-(Q-2)}\left\|\frac{g}{|x'|^{\frac{2-\beta}{2}}} \right\|_{L^2(\mu)}^{-1}\right)\,d\mu:=I(g)+J(g)\,.
    \end{split}
\end{equation*}
Using the horizontal Gross-type Hardy inequality \eqref{gaus.log.Hard} to estimate $I(g)$ we get 
\begin{equation*}
    \begin{split}
       &\  \int_{\G}\frac{|g|^2}{|x'|^{2-\beta}}\left\|\frac{g}{|x'|^{\frac{2-\beta}{2}}} \right\|_{L^2(\mu)}^{-2}\,d\mu-\left(\int_{\G}\frac{|g|^{\frac{2}{q}}}{|x'|^{\frac{2-\beta}{q}}}\left\|\frac{g}{|x'|^{\frac{2-\beta}{2}}} \right\|_{L^2(\mu)}^{-\frac{2}{q}}\,d\mu\right)^q \\&
       \leq 2(q-1)\left\|\frac{g}{|x'|^{\frac{2-\beta}{2}}} \right\|_{L^2(\mu)}^{-2} \int_{\G}|\nabla_{H}g|^2\,d\mu + J(g)\,,
    \end{split}
\end{equation*}
or, after simplification,
\begin{equation*}
    \int_{\G}\frac{|g|^2}{|x'|^{2-\beta}}\,d\mu-\left(\int_{\G}\frac{|g|^{\frac{2}{q}}}{|x'|^{\frac{2-\beta}{q}}}\,d\mu\right)^q \leq 2(q-1)\int_{\G}|\nabla_{H}g|^2\,d\mu+J(g)
\end{equation*}
where we have set
\begin{equation*}\label{def.J(g)}
J(g):=2(q-1)\int_{\G}\frac{|g|^2}{|x'|^{2-\beta}}\log \left(|g||x'|^{^{\frac{\beta-2}{2}-(Q-2)}}\left\|\frac{g}{|x'|^{\frac{2-\beta}{2}}} \right\|_{L^2(\mu)}^{-1}\right)\,d\mu\,,
\end{equation*}
and the proof is complete.
\end{proof}
Corollary \ref{cor.hor.poi} together with Remark \ref{rem.J(u)} imply the following:
\begin{cor}[Horizontal Gross-type Hardy-Poincar\'e inequality on $\G$. II]\label{cor.hor.poi2} Let $\G$ and $\mu$ be as in the hypothesis of Theorem \ref{thm.gaus.Hardy.G}. Then for  all $0\leq \beta <2<Q$, and for all  $g \in C_{0}^{\infty}(\G)$, we have
\begin{equation}\label{hor.poi2}
 \int_{\G}\frac{|g|^2}{(|x'|+c)^{2-\beta}}\,d\mu-\left(\int_{\G}\frac{|g|^{\frac{2}{q}}}{(|x'|+c)^{\frac{2-\beta}{q}}}\,d\mu\right)^q \leq 2(q-1)\int_{\G}|\nabla_{H}g|^2\,d\mu\,.
\end{equation}
for any $c \geq 1$.
\end{cor}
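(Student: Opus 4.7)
The plan is to repeat the argument of Corollary \ref{cor.hor.poi} with the weight $|x'|$ replaced by $|x'|+c$, and then discard the residual term $J(g)$ via Remark \ref{rem.J(u)}. Since $c \geq 1$ forces $|x'|+c \geq 1$, and since $(\beta-2)-(2Q-4) < 0$ when $0 \leq \beta < 2 < Q$, Remark \ref{rem.J(u)} guarantees $J(g) \leq 0$ for the weight $w = |x'|+c$, so the only real content is producing the weighted Hardy--Poincar\'e inequality with $J(g)$ included.

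To carry out this repetition, I first check that all intermediate inputs survive the weight change. The horizontal Hardy inequality \eqref{hardy2} immediately implies its analogue
\[
\int_\G |\nabla_{H} u|^2\,dx \geq \Bigl(\frac{N-2}{2}\Bigr)^2 \int_\G \frac{|u|^2}{(|x'|+c)^2}\,dx
\]
by the pointwise comparison $(|x'|+c)^{-2}\leq |x'|^{-2}$, so the abstract hypothesis \eqref{hardyM} with $w=|x'|+c$ holds and Theorem \ref{thm.logHardyM} delivers the corresponding logarithmic Hardy inequality. I then re-run the proof of Theorem \ref{thm.gaus.Hardy.G} with the same substitution $u=\gamma^{1/2}e^{-|x'|^2/4}g$ (which preserves the $L^2(\G)$-normalization of $u/(|x'|+c)^{(2-\beta)/2}$), replacing the weight $|x'|$ by $|x'|+c$ throughout. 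The estimate \eqref{EQ:aterm} is adapted to the new weight by splitting the integral into $\{|x'|\leq 1\}$ and $\{|x'|\geq 1\}$: on the first piece $(|x'|+c)^{2-\beta}\geq 1$ bounds the weighted term by a constant, while on the second $|x'|^2/(|x'|+c)^{2-\beta}\leq |x'|^{\beta}\leq |x'|^2$ since $\beta\leq 2$. The integration-by-parts identity \eqref{thm.eq.nablag} is unchanged, so choosing a normalization $\gamma$ analogous to \eqref{k-w} yields the Gross-type Hardy inequality with weight $|x'|+c$.

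With this weighted Gross-type Hardy inequality in hand, the proof of Corollary \ref{cor.hor.poi} transcribes \emph{verbatim}: apply the convexity-based inequality
\[
\int_\G |u|^2\,d\mu - \Bigl(\int_\G |u|^{2/q}\,d\mu\Bigr)^q \leq 2(q-1)\int_\G |u|^2\log|u|\,d\mu
\]
to $u = g/(|x'|+c)^{(2-\beta)/2}\cdot\|g/(|x'|+c)^{(2-\beta)/2}\|_{L^2(\mu)}^{-1}$, split the logarithmic term into a piece $I(g)$ of Gross--Hardy type and the residual $J(g)$ exactly as in the proof of Corollary \ref{cor.hor.poi}, and bound $I(g)$ by the weighted Gross-type Hardy inequality. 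This yields the analogue of \eqref{hor.poi} with $|x'|$ replaced by $|x'|+c$, and Remark \ref{rem.J(u)} then kills $J(g)$ and produces \eqref{hor.poi2}.

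The main obstacle is the adaptation of Theorem \ref{thm.gaus.Hardy.G} to the weight $|x'|+c$: one must verify that the normalization $\gamma$ can be chosen so the closing argument $\log r \leq r-1$ still works, and in particular that the modified version of \eqref{EQ:aterm} only produces a harmless additive constant depending on $c$. The hypothesis $c\geq 1$ (ensuring $|x'|+c\geq 1$ uniformly) is precisely what makes this estimate clean, and it is the same hypothesis that ultimately lets Remark \ref{rem.J(u)} absorb $J(g)$, so the two halves of the argument lean on the same structural input.
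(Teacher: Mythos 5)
Your proposal is correct and follows essentially the same route as the paper, whose entire proof is the single remark that Corollary \ref{cor.hor.poi} together with Remark \ref{rem.J(u)} imply the result; you simply make explicit what that one-liner leaves implicit, namely that the whole chain (Hardy inequality via the pointwise bound $(|x'|+c)^{-2}\leq |x'|^{-2}$, then Theorem \ref{thm.logHardyM}, the adaptation of \eqref{EQ:aterm} and of Theorem \ref{thm.gaus.Hardy.G}, and finally the convexity argument of Corollary \ref{cor.hor.poi}) must be re-run with the shifted weight $|x'|+c$ before Remark \ref{rem.J(u)} can be invoked to discard $J(g)$. The extra care you take in verifying that each intermediate estimate survives the weight change is a genuine improvement in rigour over the paper's terse justification, but it is not a different method.
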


\begin{rem}
    It is important to note that the horizontal Gross-type Hardy-Poincar\'e inequalities \eqref{hor.poi} and \eqref{hor.poi2} are already new in the Euclidean setting, as they generalise the so-called generalised Poincar\'e inequality \eqref{Poi.bec}  by adding weights.
    
    %Additionally, in Corollary \ref{cor.gen. poi}, we will show that the difference 
   % \[
   %  \int_{\R^n}\frac{|g|^2}{|x|^{2-\beta}}\,d\mu-\left(\int_{\R^n}\frac{|g|^{\frac{2}{q}}}{|x|^{\frac{2-\beta}{q}}}\,d\mu\right)^q\,,\quad q>1\,,
    %\]
    %where $|x|$ stands for the Euclidean norm of $x \in \R^n$, and $\mu$ is the Gaussian measure on $\R^n$, behaves %differently in different regions of $\R^n$. 
\end{rem}
%In particular, we have the following result which follows from Corollary \ref{cor.hor.poi} in the particular case where $\G=\R^n$ for $n \geq 3$.
%\begin{cor}[Generalised Poincar\'e inequality with weights on $\R^n$]\label{cor.gen. poi}
%Let $\R^n$ with $n\geq 3$, and let  $g$ be such that $\left\|\frac{g}{|x|^{\frac{2-\beta}{2}}} \right\|_{L^2(\mu)}=1,$ where $\mu$ is the Gaussian measure on $\R^n$, and $0\leq \beta <2<n$. Then we have the following inequalities for $g \in C_{0}^{\infty}(\R^n)$
%\[
%     \int_{|x|\geq 1}\frac{|g|^2}{|x|^{2-\beta}}\,d\mu-\left(\int_{|x| \geq 1}\frac{|g|^{\frac{2}{q}}}{|x|^{\frac{2-\beta}{q}}}\,d\mu\right)^q \leq 2(q-1) \int_{|x|\geq 1}|\nabla g|^2\,d\mu\,,
%    \]
%    and 
% \[
 %    \int_{|x|\leq 1}\frac{|g|^2}{|x|^{2-\beta}}\,d\mu-\left(\int_{|x| \leq 1}\frac{|g|^{\frac{2}{q}}}{|x|^{\frac{2-\beta}{q}}}\,d\mu\right)^q \leq 2(q-1) \int_{|x|\leq 1}|\nabla g|^2\,d\mu+J(g)\,,
  %  \]   
  %  where $q>1$, and 
  %  \begin{equation}\label{Jrn}
  %  J(g):=2(q-1)\int_{\R^n}\frac{|g|^2}{|x|^{2-\beta}}\log \left(|g||x|^{^{\frac{\beta-2}{2}-(n-2)}}\right)\,d\mu\,.
  %  \end{equation}
%\end{cor}
%\begin{proof}
%    The proof follows by Corollary \ref{cor.hor.poi} just by observing that the quantity $J(g)$ for $|x|\geq 1$ becomes non-negative.
%\end{proof}
Furthermore, using Corollary \ref{cor.hor.poi2} we can generalise the ``standard'' Poincar\'e inequality \eqref{Poi} in the Euclidean case $\R^n$, $n \geq 3$, by adding weights.
\begin{cor}[Poincar\'e inequality with weights on $\R^n$]\label{cor1.poi}
    Let $\R^n$ with $n\geq 3$. Then for $g \in C_{0}^{\infty}(\R^n)$, $0 \leq \beta <2$, and $c\geq 1$, we have
    \begin{equation}
        \label{poincare.weights}
            \int_{\R^n}\left(\frac{|g|}{(|x|+c)^{\frac{2-\beta}{2}}}-\mu\left(\frac{|g|}{(|x|+c)^{\frac{2-\beta}{2}}}\right)^2\right)d\mu\, \leq 2 \int_{\R^n}|\nabla g|^2\,d\mu\,,
    \end{equation}
    where $|\cdot|$ stands for the Euclidean distance in $\R^n$, $\mu$ for the Gaussian measure on $\R^n$, and we have used the notation $\mu(f):=\int_{\R^n}f\,d\mu$.
\end{cor}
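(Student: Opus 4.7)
The plan is to read Corollary \ref{cor1.poi} as the specialisation of Corollary \ref{cor.hor.poi2} to the abelian case $\G=\R^n$ at the distinguished exponent $q=2$, after which the claim reduces to the variance identity \eqref{equiv.Poi}. Since $\R^n$ is a stratified group with a single stratum, its first stratum coincides with the whole group, so the splitting $x=(x',x'')\in\R^N\times\R^{n-N}$ degenerates to $N=n$, $x'=x$, and the horizontal gradient $\nabla_H$ coincides with the standard Euclidean gradient $\nabla$. In particular the semi-Gaussian measure $\mu=\mu_1\otimes\mu_2$ used in Theorem \ref{thm.gaus.Hardy.G} reduces to the standard Gaussian measure on $\R^n$, so no further reinterpretation is needed.

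The first step is therefore to invoke Corollary \ref{cor.hor.poi2} with $\G=\R^n$, a fixed $0\leq\beta<2$, any $c\geq 1$, and the choice $q=2$. This yields
\begin{equation*}
\int_{\R^n}\frac{|g|^2}{(|x|+c)^{2-\beta}}\,d\mu-\left(\int_{\R^n}\frac{|g|}{(|x|+c)^{\frac{2-\beta}{2}}}\,d\mu\right)^{2}\leq 2\int_{\R^n}|\nabla g|^2\,d\mu\,,
\end{equation*}
which is the main analytic input. The second step is purely algebraic: set $f:=|g|/(|x|+c)^{(2-\beta)/2}$, so that $f^2=|g|^2/(|x|+c)^{2-\beta}$, and apply the identity \eqref{equiv.Poi} which holds on the probability space $(\R^n,\mu)$ because the Gaussian measure is normalised. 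This rewrites the left-hand side above as the variance $\int_{\R^n}(f-\mu(f))^2\,d\mu$, producing exactly the inequality asserted in \eqref{poincare.weights}.

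I do not expect a genuine obstacle here, since the argument is a direct specialisation followed by the standard variance identity. The only point that requires a brief justification is that \eqref{equiv.Poi} is available with the chosen $f$: one should note that $f\in L^2(\mu)$ because $g\in C_0^\infty(\R^n)$ has compact support and $(|x|+c)^{-(2-\beta)/2}$ is bounded above by $c^{-(2-\beta)/2}$ on $\R^n$, so $\mu(f)$ is finite and the manipulation is legitimate. Once this is in place the proof is complete; in particular the statement should be read with the square inside the integral, consistent with the variance interpretation afforded by \eqref{equiv.Poi}.
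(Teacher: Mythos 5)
Your proposal is correct and follows essentially the same route as the paper: specialise Corollary \ref{cor.hor.poi2} to $\G=\R^n$ (so $x'=x$, $\nabla_H=\nabla$, and $\mu$ is the standard Gaussian measure) with $q=2$, then rewrite the left-hand side via the variance identity \eqref{equiv.Poi}. Your added remarks on the integrability of $f=|g|/(|x|+c)^{(2-\beta)/2}$ and on reading the square as applying to the whole difference $f-\mu(f)$ are correct and only make the argument slightly more careful than the paper's.
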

\begin{proof} Observe that in the case where $\G=\R^n$ we have $|x'|=|x|$, and the measure $\mu$ as in the hypothesis of Theorem \ref{thm.gaus.Hardy.G} is simply the Gaussian measure on $\R^n$. Hence, as immediate consequence of Corollary \ref{cor.hor.poi2} we get 
\[
 \int_{\R^n}\frac{|g|^2}{(|x|+c)^{2-\beta}}-\left(\int_{\R^n}\frac{|g|}{(|x|+c)^{\frac{2-\beta}{2}}}\right)^2\,d\mu \leq 2 \int_{\R^n}|\nabla g|^2\,d\mu\,.
\]
The proof is complete under the observation that for the probability measure $\mu$ the left-had side of the above inequality is equal to the left-hand side of \eqref{poincare.weights}.
\end{proof}

\section{Acknowledgement}
It is a pleasure to thank Gerassimos Barbatis and Michael Ruzhansky for suggesting the problem studied here, and for their helpful comments and suggestions. The author is a postdoctoral fellow of the Research Foundation-Flanders (FWO) under the postdoctoral grant No 12B1223N. 

\end{document}